\tikzset{v/.style={circle, draw, inner sep=2pt, minimum size=6pt, fill=white}}
\theoremstyle{plain}
\newtheorem{theorem}{Theorem}[section]
\newtheorem{corollary}[theorem]{Corollary}
\newtheorem{lemma}[theorem]{Lemma}
\theoremstyle{definition}
\newtheorem{definition}[theorem]{Definition}
\newtheorem{remark}[theorem]{Remark}
\newtheorem{example}[theorem]{Example}
\newtheorem{proposition}[theorem]{Proposition}
\newcommand{\N}{\mathbb{N}}
\newcommand{\Q}{\mathbb{Q}}
\newcommand{\R}{\mathbb{R}}
\newcommand{\Z}{\mathbb{Z}}
\renewcommand{\L}{\mathcal{L}} 
\newcommand{\tilE}{\widetilde{E}} % orientation of graph
\newcommand{\cE}{E}
\newcommand{\Hom}{\operatorname{Hom}} % hom functor
\newcommand{\Supp}{\operatorname{Supp}} % support
\newcommand{\FTC}{\operatorname{\mathcal{FTC}}} 
\newcommand{\surj}{\operatorname{surj}} 
\newcommand{\ord}{\operatorname{\mathcal{O}}} % order polynomial
\newcommand{\AOC}{\operatorname{\mathcal{AOC}}} % acyclic orientation with a compatible map
\newcommand{\kasen}{\underline{\chi}} % chromatic functor, or configuration space
\newcommand{\id}{\operatorname{id}} 
\newcommand{\ep}{\varepsilon}
\title{Euler characteristic reciprocity for chromatic, flow and order polynomials}
\author{Takahiro Hasebe\thanks{Department of Mathematics, 
Hokkaido University, North 10, West 8, Kita-ku, 
Sapporo 060-0810, JAPAN 
E-mail: thasebe@math.sci.hokudai.ac.jp}, 
Toshinori Miyatani\thanks{Department of Mathematics, 
Hokkaido University, North 10, West 8, Kita-ku, 
Sapporo 060-0810, JAPAN 
E-mail: s153033@math.sci.hokudai.ac.jp}, 
Masahiko Yoshinaga\thanks{Department of Mathematics, 
Hokkaido University, North 10, West 8, Kita-ku, 
Sapporo 060-0810, JAPAN 
E-mail: yoshinaga@math.sci.hokudai.ac.jp}}
\date{\today}
\begin{document}
\maketitle

\begin{abstract} 
The Euler characteristic of a semialgebraic set can be considered as a generalization of the cardinality of a finite set. 
An advantage of semialgebraic sets is that we can define ``negative sets'' to be the sets with negative Euler characteristics. 
Applying this idea to posets, we introduce the notion of semialgebraic posets. 
Using ``negative posets'', 
we establish Stanley's reciprocity theorems for 
order polynomials at the level of Euler characteristics. 
We also formulate the Euler characteristic reciprocities for 
chromatic and flow polynomials. 

%\textbf{Mathematics Subject Classification}: 05A99, 18B99, 05C31\\
%\textbf{Keywords}: graphs, categories, chromatic equivalence
\end{abstract}

\tableofcontents

\section{Introduction}
\label{sec:intro}

Let $P$ be a finite poset. 
The \emph{order polynomial} $\ord^\leq(P, t)\in\Q[t]$ and 
the \emph{strict order polynomial} $\ord^<(P, t)\in\Q[t]$ are polynomials which satisfy 
\begin{equation}
\begin{split}
\ord^\leq(P, n)&=\#\Hom^\leq(P, [n]),\\
\ord^<(P, n)&=\#\Hom^<(P, [n]), 
\end{split}
\end{equation}
where $[n]=\{1, \dots, n\}$ with normal ordering and 
\[
\Hom^{\leq(<)}(P, [n])=\{f:P\longrightarrow[n]\mid
x<y\Longrightarrow f(x)\leq(<)f(y)\}
\]
is the set of increasing (resp. strictly increasing) maps. 

These two polynomials are related to each other by 
the following reciprocity theorem proved by 
Stanley (\cite{sta-chr, sta-ord}, see also \cite{ard,bec-rec,bec-san} for a recent survey). 
\begin{equation}
\label{eq:stanrecip}
\ord^<(P, t)=(-1)^{\#P}\cdot\ord^\leq(P, -t). 
\end{equation}
By putting $t=n$, the formula (\ref{eq:stanrecip}) can be informally 
presented as follows. 
\begin{equation}
\label{eq:vague}
\text{`` }
\#\Hom^<(P, [n])=(-1)^{\#P}\cdot
\#\Hom^\leq(P, [-n]). 
\text{ ''}
\end{equation}
It is a natural problem to extend the above reciprocity 
to homomorphisms between arbitrary (finite) posets $P$ and $Q$. 
We may expect a formula of the following type. 
\begin{equation}
\label{eq:nonmath}
\text{`` }
\#\Hom^<(P, Q)=(-1)^{\#P}\cdot\#\Hom^\leq(P, -Q). 
\text{ ''}
\end{equation}
Of course this is not a mathematically justified formula. 
In fact, 
we do not have the notion of a ``negative poset $-Q$.'' 
%the negative poset $-Q$ in the right-hand side does not make sense. 

In \cite{sch-neg}, Schanuel discussed what ``negative sets'' 
should be. A possible answer is that 
a negative set is nothing but a 
semialgebraic set which has a negative Euler characteristic 
(Table \ref{tab:schanuel}). 
\begin{table}[b]
\centering
\begin{tabular}{c||c}
Finite set&Semialgebraic set\\
\hline
Cardinality&Euler characteristic
\end{tabular}
\caption{Negative sets}
\label{tab:schanuel}
\end{table}
For example, the open simplex 
\[
\overset{\circ}{\sigma}_d
=\{(x_1, \dots, x_d)\in\R^d\mid
0<x_1<\cdots<x_d<1\}
\]
has the Euler characteristic 
$e(\overset{\circ}{\sigma}_d)=(-1)^d$, 
and the closed simplex 
\[
\sigma_d
=\{(x_1, \dots, x_d)\in\R^d\mid
0\leq x_1\leq \cdots\leq x_d\leq 1\}
\]
has $e(\sigma_d)=1$. Thus we have the following ``reciprocity''
\begin{equation}
\label{eq:simplex}
e(\overset{\circ}{\sigma}_d)=(-1)^d\cdot e(\sigma_d). 
\end{equation}
This formula looks alike Stanley's reciprocity (\ref{eq:stanrecip}). 
This analogy would indicate that (\ref{eq:stanrecip}) could be explained 
via the computations of Euler characteristic of certain semialgebraic sets. 

In this paper, by introducing the notion of 
\emph{semialgebraic posets}, we settle Euler characteristic 
reciprocity theorems for poset homomorphisms. 
Semialgebraic posets also provide a rigorous formulation for 
the reciprocity (\ref{eq:nonmath}). 
The similar idea works also for reciprocities 
of chromatic and flow polynomials. 

Briefly, a semialgebraic poset $P$ is a semialgebraic set with 
poset structure such that the ordering is defined semialgebraically 
(see Definition \ref{def:saposet}). 
Finite posets and the open interval $(0, 1)\subset\R$ are 
examples of semialgebraic posets. A semialgebraic poset $P$ 
has the Euler characteristic $e(P)\in\Z$ which is an invariant 
of semialgebraic structure of $P$ (see \S \ref{subsec:sas}). 
In particular, if $P$ is a finite poset, then $e(P)=\#P$, and 
if $P$ is the open interval $(0,1)$, then $e((0, 1))=-1$. 

The philosophy presented in the literature \cite{sch-neg} 
suggests to consider the 
``moduli space'' $\Hom^{\leq(<)}(P, Q)$ of poset homomorphisms 
from a finite poset $P$ to a semialgebraic poset $Q$, and then 
computing the Euler characteristic of the moduli space instead 
of counting the number of maps. 

Considering the space $\Hom^{\leq(<)}(P, Q)$ itself and its 
Euler characteristic are not a new idea for chromatic theory of 
finite graphs. For example, in \cite{eas-hug}, the Euler characteristic 
of the space of coloring is explored, and in \cite{yos-chr}, the functorial 
aspects of colorings are studied. 
The essential reason why the Euler characteristic works well is 
the additivity of the Euler characteristic and its consistency with 
inclusion-exclusion principle. 

The point of the present paper is to introduce the negative 
of a poset $Q$ in the category of semialgebraic posets. 
We define $-Q:=Q\times (0, 1)$. Then we have $e(-Q)=-e(Q)$. 
Furthermore, we have the following result. 

\begin{theorem}[Proposition \ref{prop:homspace} and Theorems \ref{thm:erecip}, \ref{thm:eorderp}] 
\label{thm:intro1}
Let $P$ be a finite poset and $Q$ be 
a semialgebraic poset. 
\begin{enumerate}[\rm(i)]
\item\label{i}
$\Hom^\leq(P, Q)$ and 
$\Hom^<(P, Q)$ possess structures of semialgebraic sets. 
\item\label{ii} The following reciprocity of Euler characteristics holds, 
\begin{equation*}
e(\Hom^<(P, \pm Q))=(-1)^{\#P}\cdot e(\Hom^\leq(P, \mp Q)).
\end{equation*}
%\begin{align}
%&e(\Hom^<(P, Q))=(-1)^{\#P}\cdot e(\Hom^\leq(P, Q\times(0, 1))), 
%\label{eq:iiintromain}\\ 
%&e(\Hom^\leq(P, Q))=(-1)^{\#P}\cdot e(\Hom^<(P, Q\times(0, 1))). 
%\end{align}
\item\label{iii} Let $T$ be a semialgebraic totally ordered set. 
Then 
\begin{align*}
&e(\Hom^{\leq}(P, T))=\ord^{\leq}(P, e(T)), \\
&e(\Hom^<(P, T))=\ord^<(P, e(T)). 
\end{align*}

\end{enumerate}
\end{theorem}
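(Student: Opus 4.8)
The plan is to prove the three parts of Theorem~\ref{thm:intro1} in the order they are stated, since each builds on the previous one.

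\medskip

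\textbf{Part (\ref{i}): semialgebraic structure on $\Hom$.}
First I would fix a finite poset $P = \{p_1, \dots, p_m\}$ and a semialgebraic poset $Q$ with its semialgebraically defined order relation $R_Q \subseteq Q \times Q$. A map $f: P \to Q$ is the same as a point $(f(p_1), \dots, f(p_m)) \in Q^m$, so $\Hom^{\leq}(P, Q)$ is the subset of $Q^m$ cut out by the finitely many conditions $p_i < p_j \Rightarrow (f(p_i), f(p_j)) \in R_Q$ together with, for the non-strict version, the diagonal-closure condition coming from the reflexive part of the order; since $Q^m$ is semialgebraic and each defining condition is a Boolean combination of semialgebraic conditions, $\Hom^{\leq}(P, Q)$ is semialgebraic, and likewise $\Hom^{<}(P, Q)$ using the strict relation. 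This is essentially bookkeeping and should be short; the only mild care needed is to phrase "$x < y$ in the poset $P$" correctly (one uses only the covering relations, or equivalently all strict relations — both give the same cut-out set by transitivity of $R_Q$).

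\medskip

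\textbf{Part (\ref{ii}): the reciprocity.}
This is the heart of the matter. I would first observe that by definition $-Q = Q \times (0,1)$ with the product order, and then establish a semialgebraic bijection (or at least a decomposition compatible with Euler characteristic) of the form
\[
\Hom^{<}(P, Q \times (0,1)) \;\cong\; \bigsqcup \; \Hom^{\leq}(P, Q) \times (\text{open cells of a simplicial-type subdivision of a cube}),
\]
the point being that a strictly increasing map into a product $Q \times (0,1)$ projects to a (weakly) increasing map into $Q$, and the fibre over such a map $g$ records, on each "flat" $g^{-1}(q)$, a strictly increasing map of a chain into $(0,1)$, i.e. an open simplex. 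The Euler characteristic of an open $d$-simplex is $(-1)^{d}$, which is exactly where the sign $(-1)^{\#P}$ will come from after summing over the fibre structure: each of the $\#P$ elements of $P$ contributes one factor of $(0,1)$-type freedom, up to the weak-order collapses, and additivity of $e$ turns the fibrewise product into a product of Euler characteristics. Running the same argument with $<$ and $\leq$ interchanged, and using $e((0,1)) = -1$ together with multiplicativity $e(X \times Y) = e(X)e(Y)$ and additivity over semialgebraic partitions, yields $e(\Hom^{<}(P, Q \times (0,1))) = (-1)^{\#P} e(\Hom^{\leq}(P, Q))$ and symmetrically; replacing $Q$ by $-Q$ and using $-(-Q)$ has the same Euler characteristic as $Q$ (indeed $e(-(-Q)) = e(Q)$) gives the stated $\pm/\mp$ form. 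The main obstacle is making the fibrewise decomposition precise: one must check that the "level sets" of a weakly increasing map stratify $\Hom^{\leq}(P,Q)$ semialgebraically and that the resulting family of open simplices assembles into a genuine semialgebraic partition of $\Hom^{<}(P, -Q)$, so that additivity of $e$ applies cleanly. I expect this to be handled by an explicit construction: given $g \in \Hom^{\leq}(P,Q)$, the second coordinates of a lift must be constant on each block where $g$ is constant and respect the strict order elsewhere, which is precisely the condition defining a product of open simplices indexed by a chain in the quotient poset $P/{\sim_g}$.

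\medskip

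\textbf{Part (\ref{iii}): specialization to totally ordered $T$.}
Here I would argue that when $Q = T$ is a semialgebraic totally ordered set, $\Hom^{\leq}(P, T)$ and $\Hom^{<}(P, T)$ are, as semialgebraic sets, finite disjoint unions of products of copies of $T$ and of "diagonals in $T^k$" — more precisely, stratifying by the fibre pattern of $f$ (which elements of $P$ map to equal/unequal values) expresses $\Hom^{\leq}(P,T)$ as a disjoint union, over the surjections $P \twoheadrightarrow C$ onto chains $C$ compatible with the order, of sets isomorphic to $\Hom^{<}(C, T) \cong$ (strictly increasing tuples in $T$), whose Euler characteristics are polynomials in $e(T)$ with the same combinatorial coefficients that appear in the classical order polynomial. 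Since $\ord^{\leq}(P, n) = \#\Hom^{\leq}(P,[n])$ for all $n \in \N$ and the same stratification over $[n]$ shows $\#\Hom^{\leq}(P,[n])$ is obtained by substituting $n$ for $e([n])$ into exactly the expression $e(\Hom^{\leq}(P,T))$ gives after substituting $e(T)$, the two polynomial expressions agree on infinitely many values and hence identically; this yields $e(\Hom^{\leq}(P,T)) = \ord^{\leq}(P, e(T))$, and identically for the strict version. Equivalently and more cleanly, I would prove $e(\Hom^{<}(P, T)) = \binom{e(T)}{\#\text{-chain data}}$-type formulas directly by induction on a linear extension of $P$, peeling off a maximal element and using that the set of allowed values for it is a half-open sub-interval of $T$ whose Euler characteristic is again an integer affine function of $e(T)$ — but the "agreement on infinitely many integers forces polynomial identity" route is the most economical. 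Combining (\ref{ii}) and (\ref{iii}) with $e(-T) = -e(T)$ then recovers Stanley's reciprocity (\ref{eq:stanrecip}) as the special case $T = [n]$, which is worth noting at the end.
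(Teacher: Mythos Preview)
Parts~(\ref{i}) and~(\ref{iii}) match the paper's arguments (tuples in $Q^m$ cut out by finitely many semialgebraic order-conditions; decomposition through surjections $P\twoheadrightarrow[k]$ followed by $\Hom^<([k],T)$, whose Euler characteristic is $\binom{e(T)}{k}$). For part~(\ref{ii}), first a slip: the order on $-Q=Q\times(0,1)$ is \emph{lexicographic}, not the product order. Your subsequent fibre description (``strictly increasing on each flat $g^{-1}(q)$, unconstrained elsewhere'') is the lexicographic picture, so this may be only terminological---but your later sentence ``constant on each block where $g$ is constant'' directly contradicts it and is wrong.

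The substantive gap is that the two formulas in~(\ref{ii}) are \emph{not} interchangeable by the substitution $Q\mapsto -Q$. Since $-(-Q)=Q\times(0,1)^2\neq Q$, your ``symmetrically'' step would need $e(\Hom^{<}(P,Q\times(0,1)^2))=e(\Hom^{<}(P,Q))$ as input, which is precisely what is at stake; the paper explicitly flags this non-equivalence. Your projection $\pi_{1*}\colon\Hom^{<}(P,Q\times(0,1))\to\Hom^{\leq}(P,Q)$ does handle one direction: after stratifying the base by the set $X$ of comparable pairs that collapse under $g$, the fibre is the full-dimensional open polytope $\{(t_p)\in(0,1)^P: t_{p_1}<t_{p_2}\text{ for }(p_1,p_2)\in X\}$, an open $\#P$-cell with Euler characteristic $(-1)^{\#P}$ (note $g^{-1}(q)$ need not be a chain, so this is generally not a product of open simplices, but that does not matter). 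For the \emph{other} direction the paper projects $\Hom^{\leq}(P,Q\times(0,1))\to\Hom^{\leq}(P,Q)$: over $\Hom^{<}(P,Q)$ the fibre is $(0,1)^{\#P}$, but over $M=\Hom^{\leq}\smallsetminus\Hom^{<}$ the fibre is the \emph{half-open} polytope $\{(t_p)\in(0,1)^P: t_{p_1}\le t_{p_2}\text{ for }(p_1,p_2)\in X\}$, and one must prove that each such fibre has Euler characteristic \emph{zero}. This is the ingredient your outline is missing; the paper isolates it as Lemma~\ref{lem:semiopen}, proved by writing such a half-open polytope as an open $d$-cell together with a contractible open $(d{-}1)$-cell in its boundary, so that $(-1)^d+(-1)^{d-1}=0$.
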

The most important result is the second assertion \eqref{ii} which is a rigorous formulation of the reciprocity (\ref{eq:nonmath}). 
It should be emphasized that \eqref{ii} is a substantially new result since $Q$ need not be a totally ordered set. When we specialize to the totally ordered sets $Q=[n]$ and $T=[n]\times (0,1)$, our \eqref{ii} and \eqref{iii} recover 
Stanley's reciprocity \eqref{eq:stanrecip} for order polynomials  
(see \S \ref{subsec:stanrecip}). 

Similar Euler characteristic reciprocities are obtained also for Stanley's chromatic polynomials reciprocity \cite{sta-acy} and for Breuer and Sanyal's flow polynomials reciprocity \cite{bre-san}. 

The paper is organized as follows. In \S \ref{sec:semialgposet}, we 
introduce semialgebraic posets, semialgebraic abelian groups and 
Euler characteristics. In \S \ref{sec:erecip}, we prove the main result, Theorem \ref{thm:intro1} \eqref{ii}. The proof is based on topological (cut and paste) arguments. We also deduce Stanley's reciprocity \eqref{eq:stanrecip} from the main theorem. 
In \S \ref{sec:chroflow}, we describe other Euler characteristic reciprocities, that is, for 
chromatic polynomials of simple graphs and 
flow polynomials of oriented graphs.

\section{Semialgebraic posets and Euler characteristics}

\label{sec:semialgposet}

\subsection{Semialgebraic sets}

\label{subsec:sas}

A subset $X\subset\R^n$ is said to be a semialgebraic set if 
it is expressed as a Boolean connection (i.e.\ a set expressed by a finite combination of $\cup, \cap$ and complements) of subsets of the form 
\[
\{x\in\R^n\mid p(x)>0\}, 
\]
where $p(x)\in\R[x_1, \dots, x_n]$ is a polynomial. Let $f\colon X\longrightarrow Y$ be a map between semialgebraic sets $X\subset\R^n$ 
and $Y\subset\R^m$. It is called semialgebraic if the graph 
\[
\Gamma(f)=\{(x, f(x))\mid x\in X\}\subset\R^{m+n}
\]
is a semialgebraic set. If $f$ is semialgebraic then the pull-back $f^{-1}(Y)$ and the image $f(X)$ are also semialgebraic sets (see \cite{bpr, bcr} for details). 

Any semialgebraic set $X$ has a finite partition into Nash cells, namely, a partition $X=\bigsqcup_{\alpha=1}^k X_\alpha$ such that 
$X_\alpha$ is Nash diffeomorphic (that is a semialgebraic analytic 
diffeomorphism) to the open cell $(0, 1)^{d_\alpha}$ 
for some $d_\alpha\geq 0$. Then the \emph{Euler characteristic} 
\begin{equation}
e(X):=\sum_{\alpha=1}^k(-1)^{d_\alpha}
\end{equation}
is independent of the partition (\cite{cos-ras}). Moreover, the 
Euler characteristic satisfies 
\[
\begin{split}
e(X\sqcup Y)&=e(X)+e(Y), \\
e(X\times Y)&=e(X)\times e(Y). 
\end{split}
\]

\begin{example}
As mentioned in \S \ref{sec:intro}, the closed simplex $\sigma_d$ and 
the open simplex $\overset{\circ}{\sigma}_d$ have 
$e(\sigma_d)=1$ and $e(\overset{\circ}{\sigma}_d)=(-1)^d$. 
\end{example}

\subsection{Semialgebraic posets}

\label{subsec:saposet}

\begin{definition}
\label{def:saposet}
$(P, \leq)$ is called a \emph{semialgebraic poset} if 
\begin{itemize}
\item[(a)] $(P, \leq)$ is a partially ordered set, and 
\item[(b)] there is an injection $i\colon P\hookrightarrow\R^n$ ($n\geq 0$) such 
that the image $i(P)$ is a semialgebraic set and the image of 
\[
\{(x, y)\in P\times P\mid x\leq y\}, 
\]
by the map $i\times i\colon P\times P\longrightarrow\R^n\times\R^n$, 
is also a semialgebraic subset of $\R^n\times\R^n$. 
\end{itemize}
\end{definition}
Let $P$ and $Q$ be semialgebraic posets. The set of 
homomorphisms (strict homomorphisms) of semialgebraic posets 
is defined by 
\begin{equation}
\Hom^{\leq(<)}(P, Q)=
\left\{
f\colon P\longrightarrow Q
\left|
\begin{array}{l}
\text{$f$ is a 
semialgebraic map s.t. }\\
x<y\Longrightarrow f(x)\leq(<)f(y)
\end{array}
\right.
\right\}. 
\end{equation}

\begin{example}
\label{ex:saposet}
\begin{itemize}
\item[(a)] 
A finite poset $(P, \leq)$ admits the structure of a semialgebraic poset, 
since any finite subset in $\R^n$ is a semialgebraic set. A finite poset 
has the Euler characteristic $e(P)=\#P$. 
\item[(b)] 
The open interval $(0, 1)$ and the closed interval $[0, 1]$ are 
semialgebraic posets with respect to the usual ordering induced from $\R$. 
Their Euler characteristics are 
$e((0, 1))=-1$ and $e([0, 1])=1$, respectively. 
\end{itemize}
\end{example}

Let $P$ and $Q$ be posets. 
There are several ways to define poset structures on the product $P\times Q$. 
However, in this paper, we always consider the product $P\times Q$ 
with the lexicographic ordering: 
\[
(p_1, q_1)\leq (p_2, q_2)\Longleftrightarrow
\left\{
\begin{array}{l}
p_1<p_2, \text{ or, }\\
p_1=p_2 \text{ and }q_1\leq q_2, 
\end{array}
\right.
\]
for $(p_i, q_i)\in P\times Q$. 

\begin{proposition}
\label{prop:lex}
Let $P$ and $Q$ be semialgebraic posets. 
Then the product poset 
$P\times Q$ (with lexicographic ordering) admits the structure of a 
semialgebraic poset. 
\end{proposition}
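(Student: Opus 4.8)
The plan is to verify conditions (a) and (b) of Definition \ref{def:saposet} for $P \times Q$ directly from the corresponding data for $P$ and $Q$. Condition (a) is immediate: the lexicographic ordering on a product of two posets is a well-known partial order, and checking reflexivity, antisymmetry and transitivity is routine. So the real content is condition (b), which asks for a semialgebraic embedding of $P \times Q$ into some $\R^N$ under which the order relation becomes a semialgebraic subset of $\R^N \times \R^N$.

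First I would fix semialgebraic embeddings $i \colon P \hookrightarrow \R^m$ and $j \colon Q \hookrightarrow \R^n$ as guaranteed by the hypothesis, so that $i(P)$ and $j(Q)$ are semialgebraic sets and the order relations
\[
R_P = (i \times i)\bigl(\{(x, y) \in P \times P \mid x \leq y\}\bigr) \subset \R^m \times \R^m, \qquad
R_Q = (j \times j)\bigl(\{(u, v) \in Q \times Q \mid u \leq v\}\bigr) \subset \R^n \times \R^n
\]
are semialgebraic. Then I would take the obvious product embedding $k = i \times j \colon P \times Q \hookrightarrow \R^m \times \R^n = \R^{m+n}$. Since a product of semialgebraic sets is semialgebraic, $k(P \times Q) = i(P) \times j(Q)$ is a semialgebraic subset of $\R^{m+n}$, and $k$ is injective because $i$ and $j$ are. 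It remains to identify the image of the order relation under $k \times k$ and show it is semialgebraic.

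Here is where the lexicographic definition gets used. Unwinding the definition, $(p_1, q_1) \leq (p_2, q_2)$ in $P \times Q$ holds precisely when either $p_1 < p_2$, or $p_1 = p_2$ and $q_1 \leq q_2$. Translating via the embeddings, the image of the order relation inside $\R^{m+n} \times \R^{m+n}$, with coordinates $(a_1, b_1, a_2, b_2)$ where $a_i \in \R^m$ and $b_i \in \R^n$, is
\[
\bigl\{(a_1, b_1, a_2, b_2) \mid (a_1, a_2) \in R_P,\ a_1 \neq a_2\bigr\}
\ \cup\
\bigl\{(a_1, b_1, a_2, b_2) \mid a_1 = a_2,\ (b_1, b_2) \in R_Q\bigr\},
\]
after intersecting with $k(P\times Q) \times k(P\times Q)$. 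The set $\{a_1 = a_2\}$ is algebraic (an intersection of hyperplanes), its complement $\{a_1 \neq a_2\}$ is semialgebraic, and $R_P$, $R_Q$ are semialgebraic by hypothesis; pull-backs along the coordinate projections $\R^{m+n}\times\R^{m+n} \to \R^m \times \R^m$ and $\to \R^n \times \R^n$ preserve semialgebraicity, and finite unions and intersections of semialgebraic sets are semialgebraic. Hence the displayed set is semialgebraic, which establishes (b).

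The only subtle point — and the one I would be most careful about — is the rewriting of the strict part $p_1 < p_2$ in terms of the given data. The hypothesis supplies semialgebraicity of the \emph{non-strict} relation $R_P = \{x \leq y\}$; the strict relation is $R_P \setminus \Delta_P$, where $\Delta_P$ is the diagonal of $i(P) \times i(P)$, which is semialgebraic (it is the intersection of $i(P)\times i(P)$ with the algebraic diagonal of $\R^m \times \R^m$). So $\{x < y\}$ is semialgebraic as the difference of two semialgebraic sets, and no difficulty arises. Everything else is a bookkeeping exercise with Boolean operations and projections, all of which stay within the category of semialgebraic sets; I expect the write-up to be short.
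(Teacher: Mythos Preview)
Your proposal is correct and follows essentially the same approach as the paper: both arguments take the product embedding $i\times j$, split the lexicographic relation into the two cases $p_1<p_2$ and $(p_1=p_2,\ q_1\leq q_2)$, and then invoke closure of semialgebraic sets under products, intersections, unions and complements. Your treatment is in fact a bit more explicit than the paper's about why the strict relation $\{p_1<p_2\}$ is semialgebraic (as $R_P$ minus the diagonal), which is a point the paper leaves implicit.
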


\begin{proof}
Suppose $P\subset\R^n$ and $Q\subset\R^m$. Then 
\[
\begin{split}
&\{((p_1, q_1), (p_2, q_2))\in(P\times Q)^2\mid 
(p_1, q_1)\leq (p_2, q_2)\}\\
&=
\{(p_1, q_1, p_2, q_2)\in(P\times Q)^2 \mid (p_1<p_2)\text{ or }(p_1=p_2 \text{ and }
q_1\leq q_2)\} \\
&\simeq \left(\{(p_1, p_2)\in P^2\mid p_1<p_2\} \times Q^2\right) \sqcup \left( P \times \{(q_1,q_2) \in Q^2\mid q_1\leq q_2\}\right) 
\end{split}
\]
is also semialgebraic since semialgebraicity is preserved by disjoint union, complement and Cartesian products. 
\end{proof}

\begin{proposition}
\label{prop:proj}
Let $P$ and $Q$ be semialgebraic posets. Then the first projection 
$\pi\colon P\times Q\longrightarrow P$ is a homomorphism of semialgebraic 
posets. 
\end{proposition}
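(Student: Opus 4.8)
The plan is to verify the two defining properties of a homomorphism of semialgebraic posets for the projection $\pi\colon P\times Q\longrightarrow P$: that $\pi$ is a semialgebraic map, and that it is order-preserving. The order-preserving property is the easy half and I would dispatch it first. Writing out the lexicographic order, if $(p_1,q_1)<(p_2,q_2)$ then either $p_1<p_2$, in which case $\pi(p_1,q_1)=p_1<p_2=\pi(p_2,q_2)$, or $p_1=p_2$ (and $q_1\leq q_2$ with $(q_1,q_2)\neq(q_1,q_1)$, hence $q_1<q_2$), in which case $\pi(p_1,q_1)=\pi(p_2,q_2)$; in either case $\pi(p_1,q_1)\leq\pi(p_2,q_2)$, as required. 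Note this shows $\pi$ is a homomorphism for the $\leq$-relation; it is generally not a strict homomorphism, consistent with the statement.

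For semialgebraicity, I would fix embeddings $P\subset\R^n$ and $Q\subset\R^m$ as in Definition \ref{def:saposet}, so that $P\times Q\subset\R^{n+m}$ is the ambient semialgebraic set carrying the poset $P\times Q$ (its carrier as a set is literally the Cartesian product $i(P)\times i(Q)$, which is semialgebraic). The graph of $\pi$ is then
\[
\Gamma(\pi)=\{((p,q),p')\in (P\times Q)\times P\mid p'=p\}\subset\R^{n+m}\times\R^n,
\]
which is cut out inside the semialgebraic set $(P\times Q)\times P$ by the polynomial equations $x_i=z_i$ for $i=1,\dots,n$ (writing $(x,y)\in\R^n\times\R^m$ for the coordinates on $P\times Q$ and $z\in\R^n$ for the coordinates on the target $P$). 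Each such equation defines a semialgebraic set, and intersecting finitely many semialgebraic sets with the semialgebraic set $(P\times Q)\times P$ yields a semialgebraic set, so $\Gamma(\pi)$ is semialgebraic. By the definition in \S\ref{subsec:sas}, $\pi$ is therefore a semialgebraic map.

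There is no real obstacle here; the only point requiring a moment's care is bookkeeping with the embeddings — making sure that the ``equality'' condition defining the graph is expressed as polynomial conditions on the correct block of coordinates, and that one is intersecting within the right ambient semialgebraic set rather than in all of $\R^{n+m}\times\R^n$ (the latter would also work, since the pull-back of $(P\times Q)\times P$ under inclusion is semialgebraic, but it is cleaner to work relative to the carrier). Combining the two halves, $\pi$ satisfies both (a) and (b) of the homomorphism definition, completing the proof.
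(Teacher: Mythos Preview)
Your proof is correct and follows the same approach as the paper, which simply says ``This is straightforward from the definition of the lexicographic ordering.'' You have spelled out both the order-preserving check and the semialgebraicity of the graph in full detail, whereas the paper leaves these as evident; there is no substantive difference in method.
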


\begin{proof}
This is straightforward from the definition of the lexicographic 
ordering. 
\end{proof}
The next result shows that the ``moduli space'' 
of homomorphisms from a finite poset to a semialgebraic poset 
has the structure of a semialgebraic set. 

\begin{proposition}[Theorem \ref{thm:intro1} \eqref{i}]
\label{prop:homspace}
Let $P$ be a finite poset and $Q$ be a semialgebraic poset. 
Then $\Hom^{\leq}(P, Q)$ and $\Hom^<(P, Q)$ have structures  
of semialgebraic sets. 
\end{proposition}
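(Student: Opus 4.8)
The plan is to realize $\Hom^{\leq}(P,Q)$ and $\Hom^<(P,Q)$ explicitly as semialgebraic subsets of a suitable Euclidean space, exploiting that $P$ is finite. Write $P=\{p_1,\dots,p_k\}$ and suppose $Q\subset\R^m$ is semialgebraic with the ``graph of the order relation'' $G_Q:=\{(x,y)\in Q\times Q\mid x\leq y\}\subset\R^{2m}$ semialgebraic (this is exactly condition (b) of Definition \ref{def:saposet}). A map $f\colon P\to Q$ is nothing but a point $(f(p_1),\dots,f(p_k))\in Q^k\subset\R^{mk}$, so the natural ambient space is $\R^{mk}$, and the set of all maps $P\to Q$ is simply $Q^k$, which is semialgebraic since finite products of semialgebraic sets are semialgebraic.

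First I would observe that every map from a finite poset is automatically a semialgebraic map: its graph is a finite union of ``horizontal slices'' $\{p_i\}\times\{f(p_i)\}$, hence semialgebraic, so the semialgebraicity clause in the definition of $\Hom$ imposes no constraint when the source is finite. Next I would encode the order-preservation conditions. For each pair $(i,j)$ with $p_i<p_j$ in $P$, the requirement ``$f(p_i)\leq f(p_j)$'' (resp. ``$f(p_i)<f(p_j)$'') is the condition that the point $(f(p_i),f(p_j))\in Q^2\subset\R^{2m}$ lie in $G_Q$ (resp. in $G_Q\setminus\Delta$, where $\Delta=\{(x,x)\}$ is the diagonal, which is semialgebraic being cut out by the polynomial equations $x_\ell=y_\ell$). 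Pulling this back along the coordinate projection $\pi_{ij}\colon\R^{mk}\to\R^{2m}$, $(\,\underline{x}_1,\dots,\underline{x}_k)\mapsto(\underline{x}_i,\underline{x}_j)$ — which is the restriction of a linear, hence semialgebraic, map — gives a semialgebraic subset of $\R^{mk}$. Intersecting $Q^k$ with $\pi_{ij}^{-1}(G_Q)$ (resp. $\pi_{ij}^{-1}(G_Q\setminus\Delta)$) over the finitely many covering relations, or indeed over all pairs with $p_i<p_j$, yields $\Hom^\leq(P,Q)$ (resp. $\Hom^<(P,Q)$) as a finite intersection of semialgebraic sets, hence semialgebraic.

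The only points requiring a little care are bookkeeping ones: that finitely many Boolean operations and pullbacks along polynomial (in fact linear) maps preserve semialgebraicity, which is recalled in \S\ref{subsec:sas}; that the diagonal of $\R^m$ is semialgebraic; and that it suffices to impose the order conditions on covering relations (or one may simply impose them on all comparable pairs — the resulting set is the same, and finiteness of $P$ makes either index set finite). I do not anticipate a genuine obstacle here; the proposition is essentially a packaging statement, and the substance is the elementary closure properties of the semialgebraic category together with the finiteness of the source poset.
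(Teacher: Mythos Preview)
Your argument is correct and is essentially the same as the paper's: identify a map $f$ with the tuple $(f(p_1),\dots,f(p_k))\in Q^k$ and cut out $\Hom^{\leq}(P,Q)$ (resp.\ $\Hom^{<}(P,Q)$) as the finite intersection $\bigcap_{p_i<p_j}\{(q_1,\dots,q_k)\in Q^k\mid q_i\leq q_j\}$ (resp.\ with strict inequality). Your write-up is in fact slightly more explicit than the paper's, spelling out the projections $\pi_{ij}$, the diagonal for the strict case, and the observation that the semialgebraicity clause on $f$ is automatic when the source is finite.
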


\begin{proof}
Let us set $P=\{p_1, \dots, p_n\}$ and $\L=\{(i, j)\mid p_i<p_j\}$. Since each element $f\in\Hom^{\leq}(P, Q)$ 
can be identified with the tuple $(f(p_1), \dots, f(p_n))\in Q^n$, we have the expression 
\[
\begin{split}
\Hom^{\leq}(P, Q)
&\simeq
\{(q_1, \dots, q_n)\in Q^n\mid q_i\leq q_j\text{ for }(i, j)\in\L\}\\
&=
\bigcap_{(i, j)\in\L}
\{(q_1, \dots, q_n)\in Q^n\mid q_i\leq q_j\}. 
\end{split}
\]
Clearly, the right-hand side is a semialgebraic set. 

The semialgebraicity of $\Hom^<(P, Q)$ is similarly proved. 
\end{proof}

\subsection{Semialgebraic abelian groups}

\label{subsec:semialgabgr}

An abelian group $(\mathcal{A}, +)$ is called a semialgebraic abelian group 
if there exists an injection $i\colon \mathcal{A}\hookrightarrow \R^n$ ($n\geq 0$) 
such that the image $i(\mathcal{A})$ is a semialgebraic set and 
the maps 
\[
\begin{split}
+ &\colon i(\mathcal{A})\times i(\mathcal{A})\longrightarrow i(\mathcal{A}),\ 
(i(x), i(y))\longmapsto i(x+y)\\
(-1) &\colon i(\mathcal{A})\longrightarrow i(\mathcal{A}),\ 
i(x)\longmapsto i(-x)
\end{split}
\]
are semialgebraic maps. 
Finite abelian groups and the set of all real numbers $\R$ 
are semialgebraic abelian groups. 

It is easy to see that if $\mathcal{A}_1$ and $\mathcal{A}_2$ 
are semialgebraic abelian groups, then so is the 
product 
$\mathcal{A}_1\times \mathcal{A}_2$. 

\section{Euler characteristic reciprocity}

\label{sec:erecip}

\subsection{The main result}
\label{subsec:main}

For a semialgebraic poset $Q$, let us define the negative 
by $-Q:=Q\times (0,1)$. Recall that we consider the 
lexicographic ordering on $-Q$. The main theorem of this paper is the following. 
\begin{theorem}[Theorem \ref{thm:intro1} \eqref{ii}]
\label{thm:erecip}
Let $P$ be a finite poset and $Q$ be a semialgebraic poset. 
Then 
\[
e(\Hom^{<}(P, \pm Q))=(-1)^{\#P}\cdot e(\Hom^{\leq}(P, \mp Q)). 
\]
In other words, 
\begin{equation}
\label{eq:main1}
e(\Hom^{<}(P, Q))=(-1)^{\#P}\cdot e(\Hom^{\leq}(P, Q\times (0,1)))
\end{equation}
and 
\begin{equation}
\label{eq:main2}
e(\Hom^{<}(P, Q\times(0,1)))=(-1)^{\#P}\cdot e(\Hom^{\leq}(P, Q))
\end{equation}
hold. 
\end{theorem}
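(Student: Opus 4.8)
The plan is to reduce everything to a single geometric "cut-and-paste" identity relating the coloring space over $Q$ and over $Q\times(0,1)$, together with the product and additivity properties of the Euler characteristic. First I would fix a linear extension of the finite poset $P=\{p_1,\dots,p_n\}$, say $p_1,\dots,p_n$ with $p_i<p_j\Rightarrow i<j$; this lets me think of a homomorphism as an increasing word and lets me induct on $n=\#P$. The base case $n=0$ is trivial. For the inductive step I would look at the element $p_n$ (a maximal element) and stratify $\Hom^{\leq}(P,Q\times(0,1))$ according to the value $q\in Q$ in the first coordinate of $f(p_n)$: the fiber over a fixed pattern of first coordinates decomposes as a product of an open or half-open interval factor (coming from the lexicographic second coordinate of $p_n$ relative to the values of the elements below it that map to the same $q$) times a coloring space of the smaller poset $P\setminus\{p_n\}$. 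The key local computation is the interval bookkeeping: if $k$ elements below $p_n$ are forced to share the first coordinate $q$, the admissible second coordinates for $f(p_n)$ form a set whose Euler characteristic contributes the sign that turns $\leq$ into $<$ after multiplying by $e((0,1))=-1$.

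Concretely, I would prove the cleanest form first, namely \eqref{eq:main1}: I claim there is a semialgebraic decomposition exhibiting
\[
e(\Hom^{\leq}(P,Q\times(0,1)))=(-1)^{\#P}\cdot e(\Hom^{<}(P,Q)).
\]
The mechanism is that a $\leq$-homomorphism $P\to Q\times(0,1)$ is the same as a pair consisting of a map $g\colon P\to Q$ that is a $\leq$-homomorphism together with, on each "level set" where $g$ is constant along a chain, a weakly increasing tuple of reals in $(0,1)$ resolving the ties; grouping the fibers of the projection to $g$ and using $e(\sigma_d)=1$ versus $e(\mathring\sigma_d)=(-1)^d$, the contributions telescope so that only the part of $\Hom^{\leq}(P,Q)$ where no order relation of $P$ is collapsed by $g$ survives with a nonzero net sign — and that part is exactly $\Hom^{<}(P,Q)$, each point counted with sign $(-1)^{\#P}$. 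Making this precise is essentially a refinement of the standard bijective proof of Stanley's reciprocity, with cardinalities replaced by Euler characteristics; additivity $e(X\sqcup Y)=e(X)+e(Y)$ and multiplicativity $e(X\times Y)=e(X)e(Y)$ from \S\ref{subsec:sas} are what make the substitution legitimate.

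Once \eqref{eq:main1} is in hand I would derive \eqref{eq:main2} by the same argument applied with $Q$ replaced by $Q\times(0,1)$ and then combining: \eqref{eq:main1} for the poset $Q\times(0,1)$ reads $e(\Hom^{<}(P,Q\times(0,1)))=(-1)^{\#P}e(\Hom^{\leq}(P,Q\times(0,1)^2))$, but one also has a direct geometric identification collapsing an extra $(0,1)$ factor — alternatively, and more cleanly, I would prove \eqref{eq:main2} directly by the mirror-image stratification (stratifying a $<$-homomorphism to $Q\times(0,1)$ by its first-coordinate map and noting each tie-resolution now lives in an \emph{open} simplex). The unified statement $e(\Hom^{<}(P,\pm Q))=(-1)^{\#P}e(\Hom^{\leq}(P,\mp Q))$ is just the conjunction of \eqref{eq:main1} and \eqref{eq:main2}.

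The main obstacle I anticipate is not the topology but the bookkeeping of the stratification: one must choose the strata (indexed by the partition of $P$ into fibers of the first-coordinate map together with the induced order data) so that each stratum is genuinely semialgebraic and Nash-cell-decomposable, and so that the interval factors split off as honest Cartesian products rather than merely fiber bundles. I would handle this by stratifying finely enough that on each stratum the relevant inequalities among second coordinates become a fixed Boolean combination, making the product structure manifest; then additivity and multiplicativity finish the count. A secondary subtlety is that $\Hom^{<}(P,Q)$ can be empty (e.g. if $P$ has a chain longer than any chain in $Q$), in which case $e=0$ on the left and the right side must also vanish — this is automatic because then the surviving stratum in the decomposition is empty as well.
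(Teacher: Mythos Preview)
Your approach is essentially the paper's: project $\Hom^{\leq}(P,Q\times(0,1))$ onto $\Hom^{\leq}(P,Q)$ via the first coordinate, stratify the base by the ``collapsing pattern'' $X=\{(p_1,p_2)\in P^2\mid p_1<p_2,\ g(p_1)=g(p_2)\}$, and compute the Euler characteristic fiberwise. Over the stratum $X=\emptyset$ (i.e.\ $g$ strict) the fiber is $(0,1)^{\#P}$, giving the factor $(-1)^{\#P}$; over each $X\neq\emptyset$ the fiber is the half-open order polytope $F_X=\{(t_p)\in(0,1)^P\mid t_{p_1}\le t_{p_2}\text{ for }(p_1,p_2)\in X\}$, and one needs $e(F_X)=0$. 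Your argument for \eqref{eq:main2} (open-polytope fibers, each with Euler characteristic $(-1)^{\#P}$) is exactly the paper's, and your warning that the stratification must be fine enough to make the fiber a genuine product is well taken and is precisely what the $M_X$-decomposition accomplishes.

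The one place where your sketch is too loose is the claim that the $X\neq\emptyset$ contributions ``telescope'' via $e(\sigma_d)=1$ versus $e(\mathring\sigma_d)=(-1)^d$. The fiber $F_X$ is not a simplex, and there is no obvious cancellation among the $M_X$'s themselves; the vanishing really is a statement about each $F_X$ individually. The paper isolates this as a lemma (Lemma~\ref{lem:semiopen}): for any polytope $\overline F$ and any boundary point $x_0\in\partial\overline F$, the associated half-open region $\overline F_{x_0}$ (keep the facet inequalities through $x_0$ weak, make the rest strict) has $e(\overline F_{x_0})=0$, because it decomposes as the open interior together with the star of $x_0$ in $\partial\overline F$, a contractible open $(d-1)$-disk. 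Applying this with $x_0=(\tfrac12,\dots,\tfrac12)$ gives $e(F_X)=0$. Your inductive plan via a maximal element $p_n$ would also work here (the $t_{p_n}$-fiber is a half-open interval $[\max t_p,1)$ whenever some $(p,p_n)\in X$), but you would still need to handle the case where $p_n$ is not involved in $X$ by passing the induction to $P\setminus\{p_n\}$, so it is cleaner to prove the polytope lemma once and for all.
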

Note that since $-(-Q)\neq Q$, (\ref{eq:main1}) and (\ref{eq:main2}) 
are not equivalent. 
%Note that $Q\times(0,1)$ in the right-hand side of (\ref{eq:main1}) 
%is a semialgebraic poset with Euler characteristic 
%\begin{equation}
%\label{eq:negative}
%e(Q\times(0,1))=-e(Q). 
%\end{equation}
%In view of the relation (\ref{eq:negative}), 
%Theorem \ref{thm:erecip} may be considered 
%as a generalization of Stanley's reciprocity (see Corollary \ref{cor:stan}). 

Before the proof of Theorem \ref{thm:erecip}, we present an example which 
illustrates the main idea of the proof. 

\begin{example}
Let $P=Q=\{1, 2\}$ with $1<2$. Clearly we have 
\[
\Hom^<(P, Q)=\{\id\}. 
\]
Let us describe $\Hom^{\leq}(P, Q\times(0, 1))$. Note that 
$Q\times(0,1)$ is isomorphic to the semialgebraic totally ordered set 
$(1, \frac{3}{2})\sqcup(2, \frac{5}{2})$ by the isomorphism 
\[
\varphi\colon Q\times(0, 1)\longrightarrow
\left(1, \frac{3}{2}\right)\sqcup\left(2, \frac{5}{2}\right), 
(a, t)\longmapsto a+\frac{t}{2}. 
\]
A homomorphism $f\in\Hom^{\leq}(P, Q\times(0, 1))$ is described 
by the two values $f(1)=(a_1, t_1)$ and $f(2)=(a_2, t_2)\in Q\times(0, 1)$. 
The condition imposed on $a_1, a_2, t_1$ and $t_2$ (by the inequality 
$f(1)\leq f(2)$) is 
\[
(a_1<a_2), \text{ or } (a_1=a_2 \text{ and }t_1\leq t_2), 
\]
which is equivalent to $a_1+\frac{t_1}{2}\leq a_2+\frac{t_2}{2}$. 
Therefore, the semialgebraic set 
$\Hom^{\leq}(P, Q\times(0, 1))$ 
can be described as in Figure \ref{fig:example}. 
\begin{figure}[htbp]
\centering
\begin{tikzpicture}[scale=1.2]

%\UTF{008A}i\UTF{008E}q
\draw[->] (-0.5,0)--(4.5,0);
\draw[->] (0,-0.5)--(0,4.5);
\draw[very thin] (-0.5,-0.5)--(4.5,4.5);

%square
\filldraw[fill=gray!20!white, draw=black, dashed, very thin] 
(1,3)--(2,3)--(2,4)-- node[above] {$a_1<a_2$} (1,4)--cycle;

\filldraw[fill=white, draw=black] (1,3) circle (2pt) ;
\filldraw[fill=white, draw=black] (1,4) circle (2pt) ;
\filldraw[fill=white, draw=black] (2,3) circle (2pt) ;
\filldraw[fill=white, draw=black] (2,4) circle (2pt) ;

%two diagonals
\filldraw[fill=gray!20!white, draw=black, dashed, very thin] 
(1,1)--(2,2)--(1,2)--cycle;
\draw[very thick] (1,1)--node[right] {$a_1=a_2=1, t_1\leq t_2$} (2,2);
\filldraw[fill=white, draw=black] (1,1) circle (2pt) ;
\filldraw[fill=white, draw=black] (1,2) circle (2pt) ;
\filldraw[fill=white, draw=black] (2,2) circle (2pt) ;

\filldraw[fill=gray!20!white, draw=black, dashed, very thin] 
(3,3)--(4,4)--(3,4)--cycle;
\draw[very thick] (3,3)--node[right] {$a_1=a_2=2, t_1\leq t_2$} (4,4);
\filldraw[fill=white, draw=black] (3,3) circle (2pt) ;
\filldraw[fill=white, draw=black] (4,4) circle (2pt) ;
\filldraw[fill=white, draw=black] (3,4) circle (2pt) ;

\end{tikzpicture}
\caption{$f(1)\leq f(2)$.}
\label{fig:example}
\end{figure}
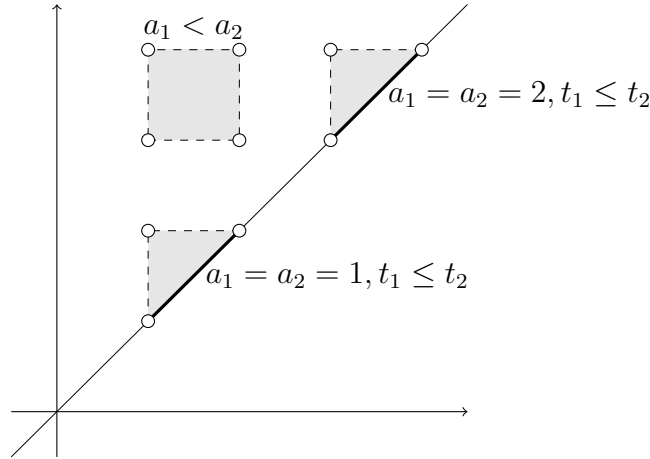
Each diagonal triangle in Figure \ref{fig:example} has a stratification 
$\overset{\circ}{\sigma_2}\sqcup\overset{\circ}{\sigma_1}$. Therefore the 
Euler characteristic is 
$e(\overset{\circ}{\sigma_2}\sqcup\overset{\circ}{\sigma_1})=
e(\overset{\circ}{\sigma_2})+e(\overset{\circ}{\sigma_1})=(-1)^2+(-1)^1=0$. 
On the other hand, the square region corresponding to $a_1<a_2$ has 
the Euler characteristic $(-1)^2=1$. Hence we have 
\[
e(\Hom^{\leq}(P, Q\times (0, 1)))=1=e(\Hom^<(P, Q)). 
\]
\end{example}
The following lemma will be used in the proof of 
Theorem \ref{thm:erecip}. 

\begin{lemma}
\label{lem:semiopen}
Let $P\subset\R^n$ be a $d$-dimensional polytope (i.e., a convex hull of 
a finite set). Fix a hyperplane description 
\[
P=\{\alpha_1\geq 0\}\cap\cdots\cap\{\alpha_N\geq 0\}
\]
of $P$ where $\alpha_i$ are affine maps from $\R^n$ to $\R$. For a given $x_0\in P$, define the associated 
locally closed subset $P_{x_0}$ of $P$ (see Figure \ref{fig:px0}) by 
\[
P_{x_0}=
\bigcap_{\alpha_i(x_0)=0}\{\alpha_i\geq 0\}
\cap
\bigcap_{\alpha_i(x_0)>0}\{\alpha_i> 0\}. 
\]
Then the Euler characteristic is 
\[
e(P_{x_0})=
\left\{
\begin{array}{ll}
(-1)^d, & \text{ if }x_0\in\overset{\circ}{P}\\
0, &\text{ otherwise }(x_0\in\partial P), 
\end{array}
\right.
\]
where $\overset{\circ}{P}$ is the relative interior of $P$ and $\partial P=
P\smallsetminus\overset{\circ}{P}$. 
\begin{figure}[htbp]
\centering
\begin{tikzpicture}[scale=1.2]

%\UTF{008A}i\UTF{008E}q
%left triangle

\filldraw[fill=gray!20!white, draw=black, dashed, very thin] 
(0,0)--(2,0)--(1,2)-- (0,0);

\filldraw[fill=black, draw=black] (0.5,1) circle (2pt) node[left] {$x_0$};
\draw[very thick] (0,0)--(1,2); 

\filldraw[fill=white, draw=black] (0,0) circle (2pt) ;
\filldraw[fill=white, draw=black] (2,0) circle (2pt) ;
\filldraw[fill=white, draw=black] (1,2) circle (2pt) ;

%right pentagon

\filldraw[fill=gray!20!white, draw=black, dashed, very thin] 
(4,0)--(6,0)--(6,1.5)-- (5,2)--(4,1.5)--cycle;

\filldraw[fill=black, draw=black] (5,2) circle (2pt) node[above] {$x_0$};
\draw[very thick] (5,2)--(6,1.5); 
\draw[very thick] (5,2)--(4,1.5); 
\filldraw[fill=white, draw=black] (4,0) circle (2pt) ;
\filldraw[fill=white, draw=black] (6,0) circle (2pt) ;
\filldraw[fill=white, draw=black] (6,1.5) circle (2pt) ;
\filldraw[fill=white, draw=black] (4,1.5) circle (2pt) ;

\end{tikzpicture}
\caption{$P_{x_0}$.}
\label{fig:px0}
\end{figure}
\end{lemma}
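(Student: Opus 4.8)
The plan is to reduce the computation to the Euler characteristic of the relative interior and the boundary of $P$ by organizing $P$ as a disjoint union of the sets $P_{x_0}$. First I would observe that $P_{x_0}$ depends only on the (relatively open) face of $P$ whose relative interior contains $x_0$: if $F$ denotes this face, then $\alpha_i(x_0)=0$ exactly when $F\subseteq\{\alpha_i=0\}$, and $\alpha_i(x_0)>0$ exactly when $F\not\subseteq\{\alpha_i=0\}$. Hence $P_{x_0}$ is the locally closed set $\bigcap_{F\subseteq\{\alpha_i=0\}}\{\alpha_i\geq 0\}\cap\bigcap_{F\not\subseteq\{\alpha_i=0\}}\{\alpha_i> 0\}$, which I will call $P_F$. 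This is a semialgebraic set, so $e(P_F)$ is defined. The first step is therefore purely combinatorial bookkeeping: identify $P_{x_0}$ with $P_F$ for the unique open face $F\ni x_0$.

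Next I would handle the case $x_0\in\overset{\circ}{P}$, i.e.\ $F=P$ itself: then no $\alpha_i$ vanishes identically on $P$ only if $P$ is full-dimensional in its affine span, and $P_{x_0}=\overset{\circ}{P}$, the relative interior of the $d$-dimensional polytope $P$. An open $d$-polytope is Nash diffeomorphic to $(0,1)^d$ (radial/projective charts from an interior point give a semialgebraic analytic diffeomorphism onto an open ball, hence onto $(0,1)^d$), so $e(\overset{\circ}{P})=(-1)^d$. For the general case I would use additivity: stratify $P=\bigsqcup_F \overset{\circ}{F}$ over all nonempty faces $F$, and note that for each $F$ the set $P_F$ is itself stratified by $\bigsqcup_{G\supseteq F}\overset{\circ}{G}$ — indeed a point $y\in P$ lies in $P_F$ iff the face containing $y$ satisfies exactly the equalities forced on $F$ and no more, which says precisely that $y$ lies in a face $G\supseteq F$. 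Thus $e(P_F)=\sum_{G\supseteq F}e(\overset{\circ}{G})=\sum_{G\supseteq F}(-1)^{\dim G}$, using the relative-interior computation face-by-face.

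The final step is to evaluate $\sum_{G\supseteq F}(-1)^{\dim G}$. The faces $G$ of $P$ containing a fixed face $F$ form, by the standard face lattice of a polytope, a poset isomorphic to the face lattice of a polytope of dimension $d-\dim F$ (the ``quotient'' or ``link'' $P/F$, realized e.g.\ as the vertex figure iterated). So this sum is the reduced Euler-characteristic-type alternating sum $\sum_{G'}(-1)^{\dim G'}$ over all nonempty faces of a $(d-\dim F)$-polytope. For a single $k$-polytope $R$ one has $\sum_{\text{faces }G'\neq\emptyset}(-1)^{\dim G'}=(-1)^k$: this is just $e(R)=(-1)^k\cdot$(sign reconciliation), most cleanly seen by $\sum_{G'\neq\emptyset}(-1)^{\dim G'}=e(R)$ via the stratification of the \emph{closed} polytope $R$ into open faces, and $e(R)=1$ would be wrong — rather one computes directly $e(R)=\sum_{G'\neq\emptyset}(-1)^{\dim G'}$, and separately $R$ is contractible closed so its compactly-supported/ordinary Euler characteristic is $1$; the alternating face sum I actually need is the one for the \emph{open} polytope, $e(\overset{\circ}{R})=(-1)^k$, i.e.\ $\sum_{G'\neq\emptyset,\,G'\subsetneq\text{nothing}}$. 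Concretely: $e(R)=1$ and $e(\partial R)=1-(-1)^k$ (a $(k-1)$-sphere), so $e(\overset{\circ}{R})=e(R)-e(\partial R)=(-1)^k$, which gives $\sum_{G'\neq\emptyset}(-1)^{\dim G'}=(-1)^k$ only after noting the closed sum telescopes to this; in any case $\sum_{G\supseteq F}(-1)^{\dim G}=(-1)^{\,d-\dim F}\cdot(-1)^{\dim F}=\ldots$ — more simply, applying the open-polytope identity to $P/F$ directly yields $\sum_{G\supseteq F}(-1)^{\dim G-\dim F}=(-1)^{d-\dim F}$, hence $e(P_F)=(-1)^{\dim F}\cdot(-1)^{d-\dim F}=(-1)^d$. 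This would give $(-1)^d$ in \emph{all} cases, contradicting the claim, so the resolution — and the real content of the lemma — is that the identity $e(\overset{\circ}{R})=(-1)^{\dim R}$ is being applied to the link of $F$ in $P$, which when $F\subseteq\partial P$ is a polytope with a \emph{marked boundary portion} coming from the facets of $P$ through $F$: the correct object is the ``star'' of $F$ relative to $P$, and its Euler characteristic is $0$ exactly when $F$ is a boundary face because the relevant set $P_F$ retracts onto a product with a half-open interval (the outward normal direction), killing the Euler characteristic.

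\textbf{Main obstacle.} The delicate point, and where I expect to spend the most care, is precisely the boundary case: showing $e(P_{x_0})=0$ when $x_0\in\partial P$. The clean argument is geometric rather than combinatorial: pick a point $z$ in the relative interior of the face $F$ containing $x_0$, and a point $w\in\overset{\circ}{P}$; the ray from $z$ away from $w$ leaves $P$ immediately, and one shows $P_{x_0}=P_F$ is semialgebraically the ``relatively open cone'' over a half-open segment — more precisely, $P_F$ deformation-retracts onto, or is a semialgebraic fiber bundle over, a set that admits a free semialgebraic $(0,1)$-action (sliding toward the interior along the inward normal to the facets that vanish on $F$, which are a nonempty set exactly because $F\subseteq\partial P$). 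Any semialgebraic set of the form $Y\times(0,1]$ or more generally fibered in half-open intervals has Euler characteristic $e(Y)\cdot e((0,1])=e(Y)\cdot 0=0$ since $e((0,1])=e([0,1])-e(\{0\})=1-1=0$. Making this fibration precise — identifying the ``radial'' semialgebraic coordinate transverse to $\partial P$ near $F$ and checking the trivialization is Nash — is the technical heart; the interior case, by contrast, is the easy direct computation $e(\overset{\circ}{P})=(-1)^d$.
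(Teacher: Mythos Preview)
Your stratification $P_{x_0}=\bigsqcup_{G\supseteq F}\overset{\circ}{G}$ (with $F$ the face whose relative interior contains $x_0$) is correct, and is exactly the decomposition the paper writes down. The interior case is fine. The gap is in your evaluation of the boundary sum $\sum_{G\supseteq F}(-1)^{\dim G}$: you apply the \emph{open}-polytope identity $e(\overset{\circ}{R})=(-1)^{\dim R}$ to the quotient $P/F$ and obtain $(-1)^d$ in all cases, which you correctly flag as wrong but never repair. The fix is that the faces $G\supseteq F$ correspond to \emph{all} faces of the polytope $P/F$, including the empty one (which corresponds to $G=F$); the relevant identity is therefore the Euler--Poincar\'e relation $\sum_{\text{faces of }R,\ \text{incl.\ }\emptyset}(-1)^{\dim}=0$ for a genuine polytope $R$, equivalently the M\"obius identity $\sum_{F\le G\le P}\mu(F,G)=[F=P]$ with $\mu(F,G)=(-1)^{\dim G-\dim F}$. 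Either formulation gives $\sum_{G\supseteq F}(-1)^{\dim G}=0$ for $F\subsetneq P$ and $(-1)^d$ for $F=P$, which is the lemma. So your combinatorial route works with one correct citation, and the half-open--interval fibration you sketch as a rescue is unnecessary (and, as you anticipate, genuinely fiddly to set up semialgebraically).

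The paper's own argument takes a different, more geometric shortcut for the boundary case. It splits $P_{x_0}=\overset{\circ}{P}\sqcup Z$ with $Z:=P_{x_0}\cap\partial P=\bigsqcup_{F\subseteq G\subsetneq P}\overset{\circ}{G}$, and observes that for every $y\in Z$ the straight segment $[x_0,y]$ lies in $Z$ (an immediate check using affineness of the $\alpha_i$). Thus $Z$ is a star-shaped open subset of the $(d-1)$-sphere $\partial P$, hence an open $(d-1)$-disk, giving $e(Z)=(-1)^{d-1}$ and $e(P_{x_0})=(-1)^d+(-1)^{d-1}=0$. This replaces both your face-lattice bookkeeping and your transverse ``radial coordinate'' with a single convexity observation inside $\partial P$; your approach, once corrected, trades that geometric step for the Euler relation on $P/F$, which is an equally short argument but of a different flavor.
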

\begin{proof}
If $x_0\in\overset{\circ}{P}$, then $P_{x_0}=\overset{\circ}{P}$. 
The Euler characteristic is $e(\overset{\circ}{P})=(-1)^d$. 

Suppose $x_0\in\partial P$. Then $P_{x_0}$ can be expressed as 
\begin{equation}
P_{x_0}=\bigsqcup_{F\ni x_0}\overset{\circ}{F}, 
\end{equation}
where $F$ runs over the faces of $P$ containing $x_0$ and $\overset{\circ}{F}$ 
denotes its relative interior. Then we obtain the decomposition 
\[
P_{x_0}=\overset{\circ}{P}\sqcup\bigsqcup_{F\ni x_0, F\subset\partial P}
\overset{\circ}{F}. 
\]
We look at the structure of the second component 
$Z:=\bigsqcup_{F\ni x_0, F\subset\partial P}\overset{\circ}{F}$. 
For any point 
$y\in Z$, the segment $[x_0, y]$ is contained in $Z$. 
Hence $Z$ is contractible open subset of $\partial P$, which is 
homeomorphic to the $(d-1)$-dimensional open disk. 
The Euler characteristic is computed as 
\[
\begin{split}
e(P_{x_0})&=e(\overset{\circ}{P})+e(Z)\\
&=(-1)^d+(-1)^{d-1}\\
&=0. 
\end{split}
\]
\end{proof}

\subsection{Proof of the main result}
\label{subsec:proof} 

Now we prove Theorem \ref{thm:erecip}. 
Let $\varphi\in\Hom^<(P, Q\times (0, 1))$. Then $\varphi$ is a 
pair of maps 
\[
\varphi=(f, g), 
\]
where $f\colon P\longrightarrow Q$ and $g\colon P\longrightarrow(0, 1)$. 
Let $\pi_1\colon Q\times (0, 1)\longrightarrow Q$ be the first 
projection. Since $\pi_1$ is order-preserving 
(Proposition \ref{prop:proj}), so is $f=\pi_1\circ\varphi$, and hence $f\in\Hom^{\leq}(P, Q)$. 

In order to compute the Euler characteristics, 
we consider the map 
\begin{equation}
\label{eq:importantmap}
\pi_{1*}\colon \Hom^{\leq}(P, Q\times (0,1))
\longrightarrow
\Hom^\leq(P, Q), ~
\varphi\longmapsto\pi_1\circ\varphi=f. 
\end{equation}
Let us set 
\begin{equation}
\begin{split}
M:=&
\Hom^{\leq}(P, Q)\smallsetminus\Hom^<(P, Q)\\
=&
\{f\in\Hom^{\leq}(P, Q)\mid \exists x<y \in P\text{ s.t. }f(x)=f(y)\}. 
\end{split}
\end{equation}
Then obviously, we have 
\begin{equation}
\label{eq:basedecomp}
\Hom^{\leq}(P, Q)=\Hom^<(P, Q)\sqcup M. 
\end{equation}
This decomposition induces that of $\Hom^{\leq}(P, Q\times(0, 1))$, 
\begin{equation}
\label{eq:totaldecomp}
\Hom^{\leq}(P, Q\times(0, 1))=
\pi_{1*}^{-1}\left(\Hom^<(P, Q)\right)\sqcup \pi_{1*}^{-1}(M). 
\end{equation}
By the additivity of the Euler characteristics, we obtain 
\begin{equation}
e\left(\Hom^{\leq}(P, Q\times(0, 1))\right)=
e\left(\pi_{1*}^{-1}\left(\Hom^<(P, Q)\right)\right)+e(\pi_{1*}^{-1}(M)). 
\end{equation}
We claim the following two equalities which are sufficient for the proof 
of (\ref{eq:main1}). 
\begin{eqnarray}
e\left(\pi_{1*}^{-1}\left(\Hom^<(P, Q)\right)\right)&=&
(-1)^{\#P}\cdot e\left(\Hom^<(P, Q)\right), 
\label{eqn:1}\\
e(\pi_{1*}^{-1}(M))&=&0. \label{eqn:2}
\end{eqnarray}
We first prove (\ref{eqn:1}). 
Let $\varphi\in \pi_{1*}^{-1}\left(\Hom^<(P, Q)\right)$, 
that is $\varphi=(f, g)$ with $f\in \Hom^<(P, Q)$. 
By the definition of the ordering of $Q\times (0, 1)$, $(f, g)$ is 
contained in $\pi_{1*}^{-1}\left(\Hom^<(P, Q)\right)$ 
for arbitrary map $g\colon P\longrightarrow (0, 1)$. This implies 
\begin{equation}
\pi_{1*}^{-1}\left(\Hom^<(P, Q)\right)\simeq
\Hom^<(P, Q)\times (0, 1)^{\#P}, 
\end{equation}
which yields (\ref{eqn:1}). 

The proof of (\ref{eqn:2}) requires further stratification of $M$. 
Let 
\[
\L(P):=\{(p_1, p_2)\in P\times P\mid p_1<p_2\}. 
\]
For given $f\in M$, consider the set of collapsing pairs, 
\[
K(f):=
\{(p_1, p_2)\in \L(P)\mid f(p_1)=f(p_2)\}. 
\]
Note that $f\in M$ if and only if $K(f)\neq\emptyset$. 
We decompose $M$ according to $K(f)$. Namely, for any nonempty 
subset $X\subset\L(P)$, define a subset $M_X\subset M$ by 
\[
M_X:=\{f\in M\mid K(f)=X\}. 
\]
Since $\L(P)$ is a finite set, 
\begin{equation}
M=\bigsqcup_{\substack{X\subset\L(P)\\ X \neq \emptyset}}M_X
\end{equation}
is a decomposition of $M$ into finitely many 
semialgebraic sets. Therefore, we obtain 
\[
e(\pi_{1*}^{-1}(M))=\sum_{\substack{X\subset\L(P)\\X\neq\emptyset}}
e(\pi_{1*}^{-1}(M_X)). 
\]
Thus it is enough to show 
$e(\pi_{1*}^{-1}(M_X))=0$ for all $X\subset \L(P)$ as long as 
$\pi_{1*}^{-1}(M_X)\neq\emptyset$ (note that $\pi_{1*}^{-1}(M_X)=\emptyset$ can occur for a nonempty $X$ e.g.\ when $\#Q=1$). 

Now we fix $X\subset\L(P)$ such that $\pi_{1*}^{-1}(M_X)\neq\emptyset$. 
Then we can show that $\pi_{1*}^{-1}(M_X)\longrightarrow M_X$ is a 
trivial fibration. Indeed, for any $f\in M_X$, the condition imposed on $g$ 
by $(f, g)\in\Hom^{\leq}(P, Q\times (0, 1))$ is 
\[
(p_1, p_2)\in X\Longrightarrow g(p_1)\leq g(p_2). 
\]
Hence the fiber $\pi_{1*}^{-1}(f)$ is independent of $f\in M_X$ and 
isomorphic to 
\begin{equation}
\label{eq:fiber}
F_X:=\{(t_p)_{p\in P}\in(0, 1)^P\mid 
(p_1, p_2)\in X\Longrightarrow t_{p_1}\leq t_{p_2}\}, 
\end{equation}
and we have 
\begin{equation}
\label{eq:mxprod}
\pi_{1*}^{-1}(M_X)\simeq M_X\times F_X. 
\end{equation}
The fiber $F_X$ is a locally closed polytope 
defined by the following inequalities. 
\[
0<t_p<1, t_{p_1}\leq t_{p_2} \text{ for }(p_1, p_2)\in X. 
\]
The closure $\overline{F_X}$ is defined by 
\[
\overline{F_X}=
\{(t_p)_{p\in P} \in[0,1]^P\mid t_{p_1}\leq t_{p_2} 
\text{ for }(p_1, p_2)\in X\}. 
\]
Then $F_X$ is equal to the locally closed polytope $(\overline{F_X})_{x_0}$ 
associated to the point $x_0=(\frac{1}{2}, \frac{1}{2}, \dots, \frac{1}{2})\in\partial\overline{F_X}$. 
Since $X\neq\emptyset$, $x_0$ is not contained in the interior of 
$\overline{F_X}$. By Lemma \ref{lem:semiopen}, $e(F_X)=0$. 
Together with (\ref{eq:mxprod}), we conclude 
$e(\pi_{1*}^{-1}(M_X))=0$. This completes the proof of 
(\ref{eq:main1}) of Theorem \ref{thm:erecip}. 

The proof of the other formula (\ref{eq:main2}) is similar to and actually simpler than that of 
(\ref{eq:main1}) since we do not need Lemma \ref{lem:semiopen}.  Again the first projection 
$\pi_{1}\colon Q\times (0, 1)\longmapsto Q$ 
induces the map 
\[
\pi_{1*}\colon \Hom^<(P, Q\times (0, 1))\longrightarrow 
\Hom^{\leq}(P, Q). 
\]
We can prove that this map is surjective and each fiber of $\pi_{1*}^{-1}(M_X)$ (now $X=\emptyset$ is allowed) is isomorphic to 
\[
\overset{\circ}{F_X}=\{(t_p)_{p\in P} \in (0,1)^{P}\mid t_{p_1} < t_{p_2} \text{~for all~}(p_1,p_2)\in X \}. 
\]
This fiber is an open polytope of dimension $\#P$ and hence is isomorphic to $(0,1)^{\#P}$ whose Euler characteristic is $(-1)^{\#P}$. Thus we obtain 
\[
\begin{split}
e(\Hom^<(P, Q\times (0, 1))) 
&= \sum_{X \subset \L(P)} e(\pi_{1*}^{-1}(M_X))  = \sum_{X \subset \L(P)} e(M_X\times \overset{\circ}{F_X}) \\
&= \sum_{X \subset \L(P)} e(M_X) \cdot (-1)^{\#P} = (-1)^{\#P} \cdot e\!\left(\bigsqcup_{X \subset \L(P)}M_X\right) \\
&= (-1)^{\# P}\cdot e(\Hom^{\leq}(P,Q)). 
\end{split}
\]
This completes the proof. 

\subsection{Stanley's reciprocity for order polynomials}

\label{subsec:stanrecip}

In this section, we deduce Stanley's reciprocity \eqref{eq:stanrecip} from Theorem \ref{thm:erecip}. The idea is to take semialgebraic totally ordered posets as the target posets. 

\begin{example}
\label{ex:satoset}
Any semialgebraic set $X\subset\R$ with induced ordering 
is a semialgebraic totally ordered set. 
Furthermore, since $\R^n$ is totally ordered by 
the lexicographic ordering, any semialgebraic set $X\subset\R^n$ admits 
the structure of a semialgebraic totally ordered set. 
\end{example}
The Euler characteristic of $\Hom^{\leq}(P, T)$, with $T$ a semialgebraic totally ordered set, can be computed by using 
the order polynomial $\ord^{\leq(<)}(P, t)$. 
\begin{theorem}[Theorem \ref{thm:intro1} \eqref{iii}]
\label{thm:eorderp}
Let $P$ be a finite poset and $T$ be a semialgebraic totally ordered set. 
Then 
\begin{align}
&e(\Hom^{\leq}(P, T))=\ord^{\leq}(P, e(T)), \label{eq:eorderepoly1} \\
&e(\Hom^{<}(P, T))=\ord^{<}(P, e(T)). \label{eq:eorderepoly2}
\end{align}
\end{theorem}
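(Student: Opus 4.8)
The plan is to reduce the computation to the case of a \emph{finite} totally ordered set by a cut-and-paste/stratification argument analogous to the one used in the proof of Theorem~\ref{thm:erecip}. First I would recall that, by the very definition of the order polynomial, the formulas \eqref{eq:eorderepoly1} and \eqref{eq:eorderepoly2} are \emph{true by definition} when $T=[n]$ is the standard finite chain: in that case $e(T)=n$ and $\Hom^{\leq(<)}(P,[n])$ is a finite set of cardinality $\ord^{\leq(<)}(P,n)$. Since $\ord^{\leq}(P,t)$ and $\ord^{<}(P,t)$ are polynomials, and since the Euler characteristic $e(T)$ of an arbitrary semialgebraic totally ordered set $T$ can be an arbitrary integer (e.g.\ $T=[n]$ gives $e(T)=n$, while $T=(0,1)\times[n]$ gives $e(T)=-n$, and one can realize any integer), it suffices to prove the \emph{polynomial identity}: both sides of \eqref{eq:eorderepoly1} (resp.\ \eqref{eq:eorderepoly2}), viewed as functions of $e(T)$, are given by the polynomials $\ord^{\leq}(P,t)$ (resp.\ $\ord^{<}(P,t)$). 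So the real content is to show that $e(\Hom^{\leq}(P,T))$ depends on $T$ only through $e(T)$ and that this dependence is polynomial with the prescribed polynomial.

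The key step is a Nash-cell decomposition of $T$. By \S\ref{subsec:sas}, write $T=\bigsqcup_{\alpha=1}^{k}T_\alpha$ with each $T_\alpha$ Nash-diffeomorphic to $(0,1)^{d_\alpha}$; since $T\subset\R^N$ and the ordering is the restriction of the lexicographic order on $\R^N$, after refining the partition I may assume each $T_\alpha$ is an \emph{interval} of $T$ (i.e.\ order-convex), and moreover that the $T_\alpha$ are linearly ordered so that every element of $T_\alpha$ is less than every element of $T_\beta$ whenever $\alpha<\beta$ --- this is the point where one uses that $T$ is totally ordered and that a Nash cell, being connected, maps to an order-interval. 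A map $f\colon P\to T$ then has a well-defined ``combinatorial type'': the function $\tau\colon P\to\{1,\dots,k\}$ recording which cell $f(p)$ lands in, together with, for each cell $\alpha$, the induced preorder on $\tau^{-1}(\alpha)$ coming from comparing the $(0,1)^{d_\alpha}$-coordinates. Stratifying $\Hom^{\leq}(P,T)$ by this combinatorial type, each stratum is a product of pieces of the form ``$\{$order-preserving or weakly order-preserving configurations of a given shape in $(0,1)^{d_\alpha}\}$'' and the Euler characteristic of each such piece is computed exactly by Lemma~\ref{lem:semiopen} (locally closed polytopes), giving a power of $(-1)$ depending only on dimensions. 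Summing over strata and using additivity and multiplicativity of $e$, one collects the result into the form $\sum_{\tau} \prod_\alpha (\text{contribution})$; the bookkeeping shows this sum equals the expression obtained by substituting $e(T)=\sum_\alpha(-1)^{d_\alpha}$ into the polynomial $\ord^{\leq}(P,t)$ (resp.\ $\ord^{<}(P,t)$). Equivalently and more cleanly: the stratification shows $e(\Hom^{\leq}(P,T))$ is a $\Z$-linear combination, with coefficients independent of $T$, of products $\binom{e(T)}{\cdot}$-type terms that only see $e(T)$, matching the standard formula $\ord^{\leq}(P,n)=\sum_{k}(\text{number of surjective order-preserving maps }P\to[k])\binom{n}{k}$.

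Alternatively --- and this is probably the slicker route to write up --- one can invoke Theorem~\ref{thm:erecip} together with an induction on $e(T)$ that mimics the classical deletion/gluing recursion for order polynomials. Concretely, if $T=T'\sqcup T''$ with $T'$ an initial segment, then $\Hom^{\leq}(P,T)$ stratifies according to which elements of $P$ map into $T'$ versus $T''$: the ``cut'' must occur along a down-closed subset $S\subseteq P$, giving $\Hom^{\leq}(P,T)=\bigsqcup_{S}\Hom^{<}(\text{something involving }S)\times\cdots$, and additivity of $e$ turns this into the convolution identity satisfied by order polynomials, $\ord^{\leq}(P,m+n)=\sum_{S}\ord^{<}(P|_S, m)\cdot\ord^{\leq}(P|_{P\setminus S},n)$ or its $\leq$-variant. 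Feeding in the base cases $T=$ single point (where $e(T)=1$ and everything is immediate) and $T=(0,1)$ (where $e(T)=-1$ and the statement is exactly the specialization of Theorem~\ref{thm:erecip} with $Q=\{*\}$, i.e.\ Stanley reciprocity for a point), and using that every semialgebraic totally ordered set is, up to Euler characteristic, built by disjoint unions of copies of a point and of $(0,1)$, one gets both formulas by induction on the number of cells.

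The main obstacle is the order-combinatorial bookkeeping in the first approach: making precise that a Nash cell of $T$ lands in an order-interval, that one can refine to get the cells \emph{linearly} ordered, and that the Euler-characteristic contribution of each combinatorial stratum assembles into exactly the coefficients of $\ord^{\leq}(P,t)$ and $\ord^{<}(P,t)$ --- this is where Lemma~\ref{lem:semiopen} does the heavy lifting for the ``collapsing'' strata (those where some $t_{p_1}=t_{p_2}$ is forced), contributing $0$ to the $\leq$-count in the combinations but surviving appropriately for the $<$-count. If instead one takes the inductive route, the obstacle shifts to verifying the convolution/gluing identity for $e(\Hom^{\leq}(P,T'\sqcup T''))$ with the correct indexing set of down-closed $S\subseteq P$ and confirming it is the \emph{same} convolution identity that characterizes the pair $(\ord^{\leq},\ord^{<})$ --- a standard but slightly fiddly fact about order polynomials that one should either cite or prove in a short lemma. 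I would write up the inductive version, with the Nash-cell decomposition invoked only to reduce to the case $T=$ (finite chain) $\sqcup$ (copies of $(0,1)$), and then finish with a one-line induction using Theorem~\ref{thm:erecip} for the $(0,1)$-factors.
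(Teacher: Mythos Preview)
Your proposal has genuine gaps in both routes, and the paper's actual proof takes a different, much shorter path.

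\textbf{Gap in the first route.} You assert that ``$T\subset\R^N$ and the ordering is the restriction of the lexicographic order on $\R^N$''. Definition~\ref{def:saposet} does \emph{not} say this: a semialgebraic poset carries an arbitrary semialgebraic order relation, and Example~\ref{ex:satoset} only says that the lexicographic order is \emph{one} way to make a semialgebraic set totally ordered. So you cannot assume the order is coordinate-wise lexicographic. Even granting that, your claim that the Nash cells can be refined to be order-convex and mutually linearly ordered is not justified; and within a higher-dimensional cell such as $(0,1)^d$ with lex order, the sets $\{(t_p)\mid t_{p_1}\le_{\mathrm{lex}} t_{p_2}\}$ are not polytopes, so Lemma~\ref{lem:semiopen} does not apply.

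\textbf{Gap in the inductive route.} The induction needs every semialgebraic totally ordered $T$ to decompose as an \emph{ordered} disjoint union of points and copies of $(0,1)$. This is false in general: take $T=(0,1)^2$ with lexicographic order. It is a single $2$-cell, order-dense, with no nontrivial initial segment that is order-isomorphic to a point or to $(0,1)$, so your convolution step never gets started. Separately, your base case $T=(0,1)$ via Theorem~\ref{thm:erecip} yields $e(\Hom^{\leq}(P,(0,1)))=(-1)^{\#P}\cdot e(\Hom^{<}(P,\{*\}))=(-1)^{\#P}\cdot\ord^{<}(P,1)$; to identify this with $\ord^{\leq}(P,-1)$ is precisely Stanley's reciprocity \eqref{eq:stanrecip} at $t=1$, which in this paper is a \emph{corollary} (Corollary~\ref{cor:stan}) of the theorem you are proving. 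So the argument would be circular in the paper's logic.

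\textbf{What the paper actually does.} The paper avoids all of this with a single clean decomposition that works for \emph{any} semialgebraic totally ordered $T$, regardless of its order type: factor each $f\in\Hom^{\ep}(P,T)$ through its image, which is a finite chain $[k]$, to obtain
\[
\Hom^{\ep}(P,T)\;\simeq\;\bigsqcup_{k\ge1}\Hom^{\ep,\surj}(P,[k])\times\Hom^{<}([k],T).
\]
The first factor is a finite set independent of $T$; the second has Euler characteristic $\binom{e(T)}{k}$ because the $\mathfrak{S}_k$-action identifies $C_k(T)$ with $k!$ disjoint copies of $\Hom^{<}([k],T)$ (Lemma~\ref{lem:etoset}), and $e(C_k(T))=e(T)(e(T)-1)\cdots(e(T)-k+1)$. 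Summing over $k$ reproduces Stanley's expansion \eqref{eq:stan} of $\ord^{\ep}(P,t)$ evaluated at $t=e(T)$. No cell decomposition of $T$, no Lemma~\ref{lem:semiopen}, and no appeal to Theorem~\ref{thm:erecip} is needed.
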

Before proving Theorem \ref{thm:eorderp}, we need several lemmas 
on the Euler characteristics of configuration spaces. 

\begin{definition}
Let $X$ be a semialgebraic set. The ordered configuration space 
of $n$-points on $X$, denoted by $C_n(X)$, is defined by 
\[
C_n(X)=\{(x_1, \dots, x_n)\in X^n\mid x_i\neq x_j\text{ if }i\neq j\}. 
\]
\end{definition}

\begin{lemma}
$e(C_n(X))=e(X)\cdot(e(X)-1)\cdots(e(X)-n+1)$. 
\end{lemma}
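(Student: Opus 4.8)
The plan is to compute $e(C_n(X))$ by induction on $n$, peeling off one point at a time via a fibration-type argument. For the base case $n=1$ we have $C_1(X)=X$, so $e(C_1(X))=e(X)$, which matches the empty-ish product on the right-hand side. For the inductive step, I would consider the projection $\pi\colon C_n(X)\longrightarrow C_{n-1}(X)$ that forgets the last coordinate, $(x_1,\dots,x_n)\longmapsto(x_1,\dots,x_{n-1})$. This is clearly a semialgebraic map, and the fiber over a point $(x_1,\dots,x_{n-1})\in C_{n-1}(X)$ is exactly $X\smallsetminus\{x_1,\dots,x_{n-1}\}$, i.e., $X$ with $n-1$ distinct points removed.

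The key point is that this fiber has Euler characteristic $e(X)-(n-1)$, independent of the chosen configuration, because a point is a single Nash cell of dimension $0$ and removing $n-1$ disjoint points subtracts $n-1$ from the Euler characteristic by additivity: $e(X\smallsetminus\{x_1,\dots,x_{n-1}\})=e(X)-(n-1)$. To conclude that $e(C_n(X))=e(C_{n-1}(X))\cdot\bigl(e(X)-(n-1)\bigr)$, I need a multiplicativity statement for Euler characteristics of semialgebraic maps whose fibers all have the same Euler characteristic. This is a standard fact in the semialgebraic category (it follows from the existence of a finite semialgebraic trivialization, or equivalently from a stratified/triangulation argument so that over each stratum the map is a trivial fibration up to Euler characteristic, combined with additivity). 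Assuming this, the induction closes: $e(C_n(X))=e(X)\bigl(e(X)-1\bigr)\cdots\bigl(e(X)-n+1\bigr)$.

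The main obstacle is justifying the multiplicativity of $e$ along $\pi$. One cannot simply invoke $e(X\times Y)=e(X)e(Y)$ because $\pi$ is generally not a product projection — the fibers $X\smallsetminus\{x_1,\dots,x_{n-1}\}$ vary as the configuration moves. The cleanest fix is to stratify $C_{n-1}(X)$ into finitely many semialgebraic pieces over each of which $\pi$ becomes a (semialgebraically) trivial fibration; this exists by the semialgebraic triviality theorem (Hardt's theorem / local triviality of semialgebraic maps). On each such stratum $S$ with fiber $\cong G$ one has $e(\pi^{-1}(S))=e(S)\cdot e(G)=e(S)\cdot(e(X)-n+1)$; summing over strata and using additivity gives the claim. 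Alternatively, one could argue directly that $C_n(X)$ and $C_{n-1}(X)\times\bigl(X\smallsetminus\{n-1\text{ points}\}\bigr)$ have a common refinement into Nash cells with matching Euler-characteristic contributions, but the trivialization route is more transparent.

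An alternative, perhaps slicker, approach avoids explicit fibrations: one proves by induction that the \emph{polynomial identity} $e(C_n(X))=\prod_{j=0}^{n-1}(e(X)-j)$ holds by inclusion–exclusion on the fat diagonal. Write $X^n=\bigsqcup_{\sigma}C_{|\sigma|}(X)\cdot(\text{multiplicity})$ where $\sigma$ runs over set partitions of $\{1,\dots,n\}$ recording which coordinates coincide; applying $e$ and using $e(X^n)=e(X)^n$ together with the combinatorial identity $t^n=\sum_{\sigma}(t)_{|\sigma|}$ (Stirling numbers) lets one solve recursively for $e(C_n(X))$ in terms of $e(X)$, yielding the falling factorial. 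This has the advantage of only using additivity and multiplicativity on products (both already stated in the excerpt), at the cost of a short combinatorial bookkeeping step; I would likely present the fibration argument as the main proof and mention this as a remark.
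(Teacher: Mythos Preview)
Your proof is correct and follows essentially the same approach as the paper: induction on $n$ via the projection $\pi\colon C_n(X)\to C_{n-1}(X)$ forgetting the last coordinate, with fiber $X\smallsetminus\{x_1,\dots,x_{n-1}\}$ of Euler characteristic $e(X)-(n-1)$. In fact you are more careful than the paper, which simply asserts the multiplicativity step $e(C_n(X))=e(C_{n-1}(X))\cdot(e(X)-n+1)$ without explicitly invoking semialgebraic triviality (Hardt's theorem) as you do.
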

\begin{proof}
It is proved by induction. When $n=1$, it is obvious from $C_1(X)=X$. 
Suppose $n>1$. Consider the projection 
\[
\pi\colon  C_n(X)\longrightarrow C_{n-1}(X), (x_1, \dots, x_n)\longmapsto
(x_1, \dots, x_{n-1}). 
\]
Then the fiber of $\pi$ at the point 
$(x_1, \dots, x_{n-1})\in C_{n-1}(X)$ is 
\[
X\smallsetminus\{x_1, \dots, x_{n-1}\}, 
\]
which has the Euler characteristic 
\[
e(X\smallsetminus\{x_1, \dots, x_{n-1}\})=e(X)-(n-1). 
\]
Therefore, from the inductive assumption, we have 
\[
\begin{split}
e(C_n(X))&=e(C_{n-1}(X))\cdot(e(X)-n+1)\\
&=e(X)\cdot(e(X)-1)\cdots(e(X)-n+1). 
\end{split}
\]
\end{proof}
\begin{remark}
We will give a stronger result later (Theorem \ref{thm:echr} 
and Corollary \ref{cor:kn}). 
\end{remark}

\begin{lemma}
\label{lem:etoset}
Let $T$ be a semialgebraic totally ordered set. Then 
\begin{equation}
e(\Hom^{<}([n], T))=\frac{e(T)\cdot(e(T)-1)\cdots(e(T)-n+1)}{n!}. 
\end{equation}
\end{lemma}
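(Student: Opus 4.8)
The plan is to identify $\Hom^{<}([n], T)$ with a single ``chamber'' of the ordered configuration space $C_n(T)$ and then invoke the previous lemma computing $e(C_n(X))$.

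First I would unwind the definition of a strict homomorphism on a chain. Since $[n]$ carries the total order $1 < 2 < \cdots < n$, a map $f\colon [n]\longrightarrow T$ is a strict homomorphism if and only if $f(1) < f(2) < \cdots < f(n)$ (semialgebraicity of $f$ is automatic because $[n]$ is finite). Hence, under the identification $f\leftrightarrow (f(1),\dots,f(n))$ used in the proof of Proposition \ref{prop:homspace}, there is a semialgebraic isomorphism
\[
\Hom^{<}([n], T)\simeq \Delta:=\{(x_1,\dots,x_n)\in T^n\mid x_1 < x_2 < \cdots < x_n\}.
\]

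Next I would decompose $C_n(T)$ into chambers. Because $T$ is totally ordered, every point $(x_1,\dots,x_n)\in C_n(T)$ has pairwise distinct coordinates and therefore admits a unique increasing rearrangement; that is, there is a unique permutation $\sigma\in S_n$ with $x_{\sigma(1)} < \cdots < x_{\sigma(n)}$. Grouping points according to $\sigma$ yields a finite partition of $C_n(T)$ into semialgebraic pieces, and for each $\sigma$ the corresponding permutation of coordinates (the restriction to $T^n$ of a linear coordinate permutation of the ambient affine space, hence semialgebraic) maps the $\sigma$-piece isomorphically onto $\Delta$. By additivity of the Euler characteristic,
\[
e(C_n(T)) = n!\cdot e(\Delta) = n!\cdot e(\Hom^{<}([n], T)).
\]

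Finally, combining this with the previous lemma, which gives $e(C_n(T)) = e(T)(e(T)-1)\cdots(e(T)-n+1)$, and dividing by $n!$ yields the assertion. I do not expect a genuine obstacle here; the only point requiring a moment's care is confirming that the coordinate-permutation maps are semialgebraic isomorphisms between the chambers of $C_n(T)$, so that additivity legitimately produces the factor $n!$.
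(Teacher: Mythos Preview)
Your proposal is correct and is essentially identical to the paper's proof: the paper also identifies $\Hom^<([n],T)$ with the increasing chamber $\{x_1<\cdots<x_n\}\subset C_n(T)$, uses the $\mathfrak{S}_n$-action by coordinate permutations to write $C_n(T)=\bigsqcup_{\sigma}\sigma(\Hom^<([n],T))$, and concludes $e(C_n(T))=n!\cdot e(\Hom^<([n],T))$ before invoking the preceding lemma on $e(C_n(X))$.
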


\begin{proof}
The set 
\[
\Hom^<([n], T)=\{(x_1, \dots, x_n)\in T^n\mid 
x_1<\cdots<x_n\} 
\]
is obviously a subset of the configuration space $C_n(T)$. 
Moreover, using the natural action of the symmetric group 
$\mathfrak{S}_n$ on $C_n(T)$ and the fact that $T$ is 
totally ordered, we have 
\[
C_n(T)=\bigsqcup_{\sigma\in\mathfrak{S}_n}
\sigma(\Hom^<([n], T)). 
\]
Since the group action preserves 
the Euler characteristic, we obtain the following. 
\[
e(C_n(T))=n!\cdot e(\Hom^<([n], T)). 
\]
\end{proof}

\begin{proof}[Proof of Theorem \ref{thm:eorderp}]  We fix $\ep \in \{\leq, <\}$. Let $f\in\Hom^{\ep}(P, T)$. Since $P$ is a finite poset, 
the image $f(P)\subset T$ is a finite totally ordered set. 
Suppose $\#f(P)=k$. Then the map $f$ is decomposed as 
\[f\colon 
P
\stackrel{\alpha}{\longrightarrow}
[k]
\stackrel{\beta}{\longrightarrow}
T, 
\]
where $\alpha\colon P\longrightarrow[k]$ is surjective while 
$\beta\colon [k]\longrightarrow T$ is injective. Hence 
$\beta$ can be considered as an element of 
$\Hom^<([k], T)$, and we have the following decomposition, 
\begin{equation}
\label{eq:tosetdec}
\Hom^\ep(P, T)=
\bigsqcup_{k\geq 1}
\Hom^{\ep, \surj}(P, [k])\times
\Hom^<([k], T), 
\end{equation}
where $\Hom^{\ep, \surj}(P, [k])$ is the set of surjective maps 
in $\Hom^{\ep}(P, [k])$. 
By putting $T=[n]$ and then extending $n$ to real numbers $t$,  we obtain the expression for the (strict) order polynomial, 
\begin{equation}
\label{eq:stan}
\ord^{\ep}(P, t)=\sum_{k\geq 1}
\#\Hom^{\ep, \surj}(P, [k])\cdot\frac{t(t-1)\cdots(t-k+1)}{k!},  
\end{equation}
which was already obtained by Stanley \cite[Theorem 1]{sta-chr}. 
Using (\ref{eq:tosetdec}), Lemma \ref{lem:etoset} and 
(\ref{eq:stan}), we have 
\[
\begin{split}
e(\Hom^{\ep}(P, T))
&=
\sum_{k\geq 1}
e(\Hom^{\ep, \surj}(P, [k]))\cdot
e(\Hom^<([k], T))\\ 
&=
\sum_{k\geq 1}
\#\Hom^{\ep, \surj}(P, [k])\cdot
\frac{e(T)(e(T)-1)\cdots(e(T)-k+1)}{k!}
\\ 
&=\ord^{\ep}(P, e(T)). 
\end{split}
\]
This completes the proof of Theorem \ref{thm:eorderp}. 
\end{proof}
\begin{corollary}
\label{cor:stan}
(Stanley's reciprocity \cite{sta-chr}) 
Let $P$ be a finite poset and $n\in\N$. Then 
\begin{equation}
\label{eq:stanley}
\#\Hom^<(P, [n])=(-1)^{\#P}\cdot \ord^{\leq}(P, -n). 
\end{equation}
\end{corollary}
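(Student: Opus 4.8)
The plan is to derive Corollary \ref{cor:stan} by combining the Euler-characteristic reciprocity of Theorem \ref{thm:erecip} with the translation of both sides into order polynomials provided by Theorem \ref{thm:eorderp}. The key observation is that $[n]$ is a semialgebraic totally ordered set with $e([n]) = n$, and that $[n] \times (0,1)$ is also totally ordered (with the lexicographic order) and satisfies $e([n]\times(0,1)) = n \cdot e((0,1)) = -n$. So I would apply Theorem \ref{thm:eorderp} to the targets $T = [n]$ and $T = [n]\times(0,1)$ and then feed the results into equation \eqref{eq:main2} of Theorem \ref{thm:erecip}.

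Concretely, first I would note that since $P$ is a finite poset and $[n]$ is finite, $\Hom^<(P, [n])$ is a finite set and $e(\Hom^<(P,[n])) = \#\Hom^<(P, [n])$; alternatively this follows from \eqref{eq:eorderepoly2} with $T = [n]$, giving $e(\Hom^<(P,[n])) = \ord^<(P, n) = \#\Hom^<(P,[n])$. Next, I would apply \eqref{eq:eorderepoly1} with $T = [n]\times(0,1)$, a semialgebraic totally ordered set by Example \ref{ex:satoset} (or Proposition \ref{prop:lex}), to get $e(\Hom^\leq(P, [n]\times(0,1))) = \ord^\leq(P, e([n]\times(0,1))) = \ord^\leq(P, -n)$. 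Then equation \eqref{eq:main2} of Theorem \ref{thm:erecip} with $Q = [n]$ reads
\[
e(\Hom^<(P, [n]\times(0,1))) = (-1)^{\#P} \cdot e(\Hom^\leq(P, [n])).
\]
Hmm — but this particular instance gives the identity with $\ord^<$ and $\ord^\leq$ swapped relative to \eqref{eq:stanley}. So instead I should apply \eqref{eq:main2} in the form already written, or rather use the chain: $\#\Hom^<(P,[n]) = e(\Hom^<(P,[n])) = \ord^<(P,n)$, and separately by Stanley's polynomial reciprocity... no, I want to avoid assuming \eqref{eq:stanrecip}. The clean route: apply \eqref{eq:eorderepoly2} with $T = [n]\times(0,1)$ to get $e(\Hom^<(P, [n]\times(0,1))) = \ord^<(P, -n)$, and apply Theorem \ref{thm:erecip} equation \eqref{eq:main2} with $Q=[n]$ to get $e(\Hom^<(P, [n]\times(0,1))) = (-1)^{\#P}\, e(\Hom^\leq(P,[n])) = (-1)^{\#P}\,\ord^\leq(P,n)$. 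That yields $\ord^<(P,-n) = (-1)^{\#P}\ord^\leq(P,n)$, which after renaming is \eqref{eq:stanrecip} but still not literally \eqref{eq:stanley}.

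The resolution of this bookkeeping — which is the only real subtlety here — is to use \eqref{eq:main1} rather than \eqref{eq:main2}. With $Q = [n]$, equation \eqref{eq:main1} gives $e(\Hom^<(P,[n])) = (-1)^{\#P} e(\Hom^\leq(P, [n]\times(0,1)))$. The left side is $\#\Hom^<(P,[n])$ since it is a finite set; the right side, by \eqref{eq:eorderepoly1} applied to the totally ordered set $[n]\times(0,1)$ with Euler characteristic $-n$, equals $(-1)^{\#P}\ord^\leq(P,-n)$. Equating the two yields exactly $\#\Hom^<(P,[n]) = (-1)^{\#P}\ord^\leq(P,-n)$, which is \eqref{eq:stanley}. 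I expect the main (indeed only) obstacle to be getting the direction of the reciprocity right: one must use \eqref{eq:main1} with target $Q=[n]$, not \eqref{eq:main2}, and one must correctly compute $e([n]\times(0,1)) = -n$ from the multiplicativity of the Euler characteristic together with $e((0,1)) = -1$ from Example \ref{ex:saposet}(b). Everything else is a direct substitution.
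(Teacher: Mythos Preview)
Your proposal is correct and, once you settle on using \eqref{eq:main1} with $Q=[n]$, it is exactly the paper's proof: the paper also notes $\#\Hom^<(P,[n])=e(\Hom^<(P,[n]))$, applies Theorem~\ref{thm:erecip} to get $(-1)^{\#P}e(\Hom^{\leq}(P,[n]\times(0,1)))$, observes $e([n]\times(0,1))=-n$, and then invokes Theorem~\ref{thm:eorderp}. The exploratory detour through \eqref{eq:main2} is unnecessary but harmless.
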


\begin{proof}
Since $\Hom^<(P, [n])$ is a finite poset, the cardinality 
is equal to the Euler characteristic: 
$\#\Hom^<(P, [n])=e(\Hom^<(P, [n]))$. 
We apply the Euler characteristic reciprocity (Theorem \ref{thm:erecip}), 
\[
e(\Hom^<(P, [n]))=(-1)^{\#P}\cdot e(\Hom^{\leq}(P, [n]\times (0,1))). 
\]
Note that $[n]\times (0, 1)$ is a semialgebraic totally ordered set 
(with the lexicographic ordering) with the Euler characteristic 
$e([n]\times (0,1))=-n$. 
Applying Theorem \ref{thm:eorderp}, we have 
\[
e(\Hom^{\leq}(P, [n]\times (0,1)))=
\ord^{\leq}(P, -n), 
\]
which implies (\ref{eq:stanley}). 
\end{proof}

\section{Chromatic and flow polynomials for finite graphs}

\label{sec:chroflow}

In this section, we formulate Euler characteristic reciprocities for 
chromatic polynomials of finite simple graphs and for flow polynomials of finite oriented graphs. 

\subsection{Chromatic polynomials}

\label{subsec:ch}

Let $G=(V, E)$ be a finite simple graph with vertex set $V$ and 
(un-oriented) edge set $E$. The chromatic polynomial is 
a polynomial $\chi(G, t)\in\Z[t]$ which satisfies 
\[
\chi(G, n)=\#\{c\colon V\longrightarrow[n]\mid v_1v_2\in E\Longrightarrow 
c(v_1)\neq c(v_2)\}, 
\]
for all $n>0$. The chromatic polynomial is also characterized 
by the following properties: 
\begin{itemize}
\item if $E=\emptyset$ then $\chi(G, t)=t^{\#V}$;  
\item if $e\in E$, then $\chi(G, t)=\chi(G-e, t)-\chi(G/e, t)$, 
where $G-e$ and $G/e$ are the deletion and the contraction with 
respect to the edge $e$, respectively. 
\end{itemize}
\begin{definition}
\label{def:kasen}
For a set $X$, define the set of vertex coloring with $X$ (or 
the graph configuration space) by 
\begin{equation}
\kasen(G, X)=\{c\colon V\longrightarrow X\mid v_1v_2\in E\Longrightarrow 
c(v_1)\neq c(v_2)\}. 
\end{equation}
\end{definition}
The assignment $X\longmapsto \kasen(G, X)$ can be considered as 
a functor (\cite{yos-chr}). The space $\kasen(G, X)$ is also 
called the graph (generalized) configuration space (\cite{eas-hug}). 

The chromatic polynomial $\chi(G,t)\in\Z[t]$ satisfies 
$\chi(G,n)=\#\kasen(G,[n])$ for all $n\in\N$. 

In this section, we investigate the Euler characteristic aspects 
of the chromatic polynomial for a finite simple graph. 

When $X$ is a semialgebraic set, $\kasen(G, X)$ is also a semialgebraic set. 
The following result generalizes \cite[Theorem 2]{eas-hug}, where the result is shown when $X$ is a complex projective space.  

\begin{theorem}
\label{thm:echr}
Let $G=(V, E)$ be a finite simple graph and $X$ be a semialgebraic set. Then 
\begin{equation}
\label{eq:echr}
e(\kasen(G, X))=\chi(G, e(X)). 
\end{equation}
\end{theorem}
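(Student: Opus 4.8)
The plan is to prove the identity $e(\kasen(G,X))=\chi(G,e(X))$ by showing that the left-hand side, as a function of $X$, obeys the same deletion–contraction recursion and initial condition that characterize the chromatic polynomial $\chi(G,t)$ (evaluated at $t=e(X)$). The strategy mirrors the structure of the paper's earlier arguments: stratify a configuration-type space, use additivity and multiplicativity of the Euler characteristic, and match against a known polynomial.

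First I would handle the base case $E=\emptyset$. Then $\kasen(G,X)=X^{\#V}$ with no constraints, so by multiplicativity $e(\kasen(G,X))=e(X)^{\#V}=\chi(G,e(X))$, since $\chi(G,t)=t^{\#V}$ when $E=\emptyset$. For the inductive step, fix an edge $e=v_1v_2\in E$. I would stratify $\kasen(G-e,X)$ according to whether the colours of $v_1$ and $v_2$ agree:
\[
\kasen(G-e,X)=\kasen(G,X)\ \sqcup\ \{c\colon V\to X\mid c(v_1)=c(v_2),\ \text{all other $E$-constraints hold}\}.
\]
The first piece is exactly $\kasen(G,X)$ (colourings of $G-e$ that also respect the edge $e$). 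The second piece is naturally identified with $\kasen(G/e,X)$: a colouring of $G-e$ with $c(v_1)=c(v_2)$ is the same as a colouring of the contracted graph $G/e$ (the merged vertex receives the common colour, and the remaining edge-constraints of $G/e$ are precisely those of $G-e$ other than $e$; one should note that in the simple-graph setting $G/e$ is taken with multiple edges identified, which does not affect the $\neq$-constraint). This identification is semialgebraic, so by additivity of $e$,
\[
e(\kasen(G-e,X))=e(\kasen(G,X))+e(\kasen(G/e,X)),
\]
that is, $e(\kasen(G,X))=e(\kasen(G-e,X))-e(\kasen(G/e,X))$. By the inductive hypothesis (both $G-e$ and $G/e$ have fewer edges), the right side equals $\chi(G-e,e(X))-\chi(G/e,e(X))=\chi(G,e(X))$, completing the induction.

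The main obstacle I anticipate is purely bookkeeping rather than conceptual: making sure the contraction $G/e$ is interpreted correctly as a \emph{simple} graph (collapsing any parallel edges and deleting the resulting loop), and checking that this collapsing is harmless because the constraint defining $\kasen$ only cares about the \emph{set} of edges, not multiplicities — two parallel edges $uv$ both impose the single condition $c(u)\neq c(v)$. A secondary point to verify is that all the sets in sight are genuinely semialgebraic (the diagonal $\{c(v_1)=c(v_2)\}$ and the various $\neq$-loci are cut out by polynomial equalities/inequalities in the coordinates of $X^{\#V}\subset(\R^n)^{\#V}$, and finite Boolean combinations of these remain semialgebraic), so that the additivity $e(A\sqcup B)=e(A)+e(B)$ applies. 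Once these routine checks are in place, the deletion–contraction recursion plus the base case pins down $e(\kasen(G,X))$ uniquely as $\chi(G,e(X))$.
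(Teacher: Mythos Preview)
Your proposal is correct and follows essentially the same approach as the paper: induction on $\#E$, the base case $E=\emptyset$ giving $e(X)^{\#V}$, and the inductive step via the semialgebraic decomposition $\kasen(G-e,X)\simeq\kasen(G,X)\sqcup\kasen(G/e,X)$ combined with additivity of the Euler characteristic and the deletion--contraction recursion for $\chi$. Your discussion of the contraction bookkeeping and semialgebraicity checks in fact spells out details that the paper leaves implicit.
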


\begin{proof}
This result is proved by induction on $\#E$. When $E=\emptyset$, 
$e(\kasen(G, X))=e(X^{\#V})=e(X)^{\#V}=\chi(G, e(X))$. Suppose 
$e\in E$. Then we can prove 
\begin{equation}
\label{eq:additivity}
\kasen(G-e, X)\simeq\kasen(G, X)\sqcup\kasen(G/e, X). 
\end{equation}
Using the additivity of the Euler characteristic and 
the recursive relation for the chromatic polynomial, 
we obtain (\ref{eq:echr}). 
\end{proof}
Note that for the complete graph $G=K_n$, 
$\kasen(K_n, X)$ is identical to the configuration space 
$C_n(X)$ of $n$-points. 
Applying Theorem \ref{thm:echr} to the complete graph 
$K_n$ (which has the chromatic polynomial $\chi(K_n, t)=
t(t-1)\cdots(t-n+1)$), we have the following. 
\begin{corollary}
\label{cor:kn}
$e(C_n(X))=e(X)(e(X)-1)\cdots(e(X)-n+1)$. 
\end{corollary}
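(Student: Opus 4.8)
The plan is to reduce the statement about $e(C_n(X))$ to the already-proven Theorem~\ref{thm:echr} by exhibiting $C_n(X)$ as a graph configuration space $\kasen(G,X)$ for a suitable graph $G$. The key observation is that the defining condition for $C_n(X)$, namely $x_i\neq x_j$ whenever $i\neq j$, is precisely the vertex-coloring condition for the complete graph $K_n$, whose vertex set is $\{1,\dots,n\}$ and which has an edge between every pair of distinct vertices. Thus a map $c\colon\{1,\dots,n\}\to X$ lies in $\kasen(K_n,X)$ if and only if $c$ sends the two endpoints of every edge to distinct points, i.e.\ if and only if the tuple $(c(1),\dots,c(n))$ belongs to $C_n(X)$. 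This gives a canonical identification $\kasen(K_n,X)\simeq C_n(X)$ as semialgebraic sets, hence $e(\kasen(K_n,X))=e(C_n(X))$.

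Next I would apply Theorem~\ref{thm:echr} with $G=K_n$, which yields
\[
e(C_n(X))=e(\kasen(K_n,X))=\chi(K_n,e(X)).
\]
The final ingredient is the standard formula $\chi(K_n,t)=t(t-1)\cdots(t-n+1)$ for the chromatic polynomial of the complete graph. This can itself be obtained from the characterizing properties of $\chi$ listed just before Theorem~\ref{thm:echr}: a proper coloring of $K_n$ with colors in $[t]$ is exactly an injection $[n]\hookrightarrow[t]$, of which there are $t(t-1)\cdots(t-n+1)$, and since two polynomials in $t$ agreeing at all positive integers coincide, the identity holds as polynomials; alternatively one can induct on $n$ using deletion–contraction. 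Substituting $t=e(X)$ then gives $e(C_n(X))=e(X)(e(X)-1)\cdots(e(X)-n+1)$, as claimed.

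There is essentially no obstacle here: the corollary is a direct specialization of Theorem~\ref{thm:echr}, and the only point requiring a sentence of justification is the identification $\kasen(K_n,X)\simeq C_n(X)$, which the paper has in fact already flagged in the remark preceding the statement (``$\kasen(K_n,X)$ is identical to the configuration space $C_n(X)$''). So the ``proof'' is just: recall $\chi(K_n,t)=t(t-1)\cdots(t-n+1)$, recall $\kasen(K_n,X)=C_n(X)$, and invoke \eqref{eq:echr}. If anything, the mild subtlety worth noting is that the earlier inductive Lemma gave the same formula but only under the implicit hypothesis used there, whereas Corollary~\ref{cor:kn} now follows uniformly for all semialgebraic $X$ from the stronger Theorem~\ref{thm:echr}; this is exactly the improvement announced in the earlier remark, so it is worth stating that the two computations agree.
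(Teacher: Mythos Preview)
Your proposal is correct and matches the paper's approach exactly: the paper also derives the corollary by noting that $\kasen(K_n,X)=C_n(X)$, invoking Theorem~\ref{thm:echr}, and plugging in the standard chromatic polynomial $\chi(K_n,t)=t(t-1)\cdots(t-n+1)$. There is nothing to add.
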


To formulate the reciprocity for chromatic polynomials, 
we recall the notion of acyclic orientations on a graph $G$. 

Let $G=(V, E)$ be a finite simple graph. The set of edges $E$ 
can be considered as a subset of 
\[
(V\times V\smallsetminus\Delta)/{\mathfrak{S}}_2, 
\]
where $\Delta=\{(v, v)\mid v\in V\}$ is the diagonal subset and 
$\mathfrak{S}_2$ acts on $V\times V$ by transposition. 
There is a natural projection 
\[
\pi\colon 
V\times V\smallsetminus\Delta\longrightarrow 
(V\times V\smallsetminus\Delta)/{\mathfrak{S}}_2. 
\]
An edge orientation on $G$ is a subset $\tilE\subset
V\times V\smallsetminus\Delta$ such that $\pi|_{\tilE}\colon 
\tilE\stackrel{\simeq}{\longrightarrow}E$ is a 
bijection. An orientation $\tilE$ is said to 
contain an oriented cycle, if there exists a 
cyclic sequence $(v_1, v_2), (v_2, v_3), \dots, (v_{n-1}, v_n), 
(v_n, v_1)\in\tilE$ for some $n>2$. 
The orientation $\tilE$ is called \emph{acyclic} if 
it does not contain oriented cycles. 

\begin{definition}
\label{def:aoc}
Let $G=(V, E)$ be a finite simple graph. Fix an acyclic orientation 
$\tilE\subset V\times V\smallsetminus\Delta$. 
Let $T$ be a totally ordered set. 
\begin{itemize}
\item[(a)] 
A map $c\colon V\longrightarrow T$ is said to be \emph{compatible} with $\tilE$ if
\[
(v, v')\in\tilE\Longrightarrow c(v)\leq c(v'). 
\]
\item[(b)] 
A map $c\colon V\longrightarrow T$ is said to be \emph{strictly compatible} with $\tilE$ if
\[
(v, v')\in\tilE\Longrightarrow c(v)< c(v'). 
\]
\end{itemize}
\end{definition}
We denote the sets of all pairs of an acyclic orientation with 
a compatible map, and with a strictly compatible map, by 
\[
\AOC^{\leq}(G, T):=
\left\{(\tilE, c)
\left|
\begin{array}{ll}
\tilE\text{ is an acyclic orientation, and }c\colon V\rightarrow T\\
\text{ is a map compatible with $\tilE$}
\end{array}
\right.
\right\}, 
\]
and
\[
\AOC^<(G, T):=
\left\{(\tilE, c)
\left|
\begin{array}{ll}
\tilE\text{ is an acyclic orientation, and }c\colon V\rightarrow T\\
\text{ is a map strictly compatible with $\tilE$}
\end{array}
\right.
\right\}, 
\]
respectively. 

If $T$ is a semialgebraic totally ordered set, then 
these spaces possess the structures of semialgebraic sets. 
We will see a reciprocity between 
these two spaces from which Stanley's reciprocity for chromatic polynomials 
is deduced. 

It is straight forward that 
$\AOC^{<}(G, T)$ can be identified with $\kasen(G, T)$. 
In particular, we have 
\begin{equation}
\label{eq:aocchi}
e(\AOC^{<}(G, T))=\chi(G, e(T)). 
\end{equation}

We formulate a reciprocity for chromatic polynomials in terms of Euler characteristics. 

\begin{theorem}
\label{thm:aocrecip}
Let $G=(V, E)$ be a finite simple graph and $T$ be a semialgebraic 
totally ordered set. Then 
\begin{align}
e(\AOC^{\leq}(G, T))=(-1)^{\#V}\cdot e(\AOC^{<}(G, T\times(0,1))), \label{eq:aocrecip1} \\
e(\AOC^{<}(G, T))=(-1)^{\#V}\cdot e(\AOC^{\leq}(G, T\times(0,1))). \label{eq:aocrecip2}
\end{align}
\end{theorem}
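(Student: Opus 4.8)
The plan is to mimic the proof of Theorem~\ref{thm:erecip} as closely as possible, replacing the role of ``finite poset $P$'' by ``finite simple graph $G$ together with the (finite) set of acyclic orientations'' and the role of the order relations $\leq, <$ on $P$ by the compatibility relations with respect to each acyclic orientation. Concretely, I would first stratify $\AOC^{\leq}(G, T)$ and $\AOC^{<}(G, T)$ according to which acyclic orientation $\tilE$ is chosen: since an acyclic orientation is a purely combinatorial datum, both spaces decompose as finite disjoint unions $\AOC^{\ep}(G, T) = \bigsqcup_{\tilE} \{c \mid c \text{ is ($\ep$-)compatible with }\tilE\}$, and the Euler characteristic is additive over this finite union. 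Thus it suffices to prove, for each fixed acyclic orientation $\tilE$, the two identities
\[
e(\{c : V\to T \mid c \text{ compatible with }\tilE\}) = (-1)^{\#V}\cdot e(\{c : V\to T\times(0,1)\mid c \text{ strictly compatible with }\tilE\})
\]
and the analogous one with $\leq$ and $<$ swapped.

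Next I would set up, exactly as in \S\ref{subsec:proof}, the pushforward map $\pi_{1*}$ induced by the first projection $\pi_1\colon T\times(0,1)\to T$. For \eqref{eq:aocrecip1} the relevant map goes $\{c' : V\to T\times(0,1) \text{ strictly compatible with }\tilE\} \to \{c : V\to T \text{ compatible with }\tilE\}$; one checks it is well defined and surjective (given a compatible $c$, any $c' = (c, g)$ with $g$ chosen so that $g(v) < g(v')$ whenever $(v,v')\in\tilE$ and $c(v) = c(v')$ works — such $g$ exists because $\tilE$ is acyclic, so the constraint digraph on each fiber is a DAG). Then I would stratify the base by the ``collapsing set'' $K(c) := \{(v,v')\in\tilE \mid c(v) = c(v')\}$, exactly the analogue of $K(f)$ in the poset proof; over the stratum where $K(c) = Y$ the map is a trivial fibration with fiber
\[
\overset{\circ}{F_Y} = \{(t_v)_{v\in V}\in(0,1)^V \mid t_v < t_{v'} \text{ for all }(v,v')\in Y\},
\]
which is an open polytope of dimension $\#V$ (here acyclicity of $\tilE\supset Y$ guarantees the defining strict inequalities are consistent and the region is nonempty and full-dimensional), hence Nash-diffeomorphic to $(0,1)^{\#V}$ with $e = (-1)^{\#V}$. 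Summing over strata and using $e(\AOC^{<}(G,T)) = \sum_{\tilE}\sum_Y e(\text{stratum}_Y)$ gives \eqref{eq:aocrecip1}. The proof of \eqref{eq:aocrecip2} runs the same way but with the map $\{c': V\to T\times(0,1) \text{ compatible}\}\to\{c : V\to T \text{ strictly compatible}\}$; here the fiber over the stratum $K(c') = Y$ is the \emph{half-open} polytope $F_Y = \{(t_v)\in(0,1)^V \mid t_v\le t_{v'} \text{ for }(v,v')\in Y\}$ cut out at the barycenter, and Lemma~\ref{lem:semiopen} applies to show $e(F_Y) = 0$ when $Y\ne\emptyset$ and $e = (-1)^{\#V}$ when $Y=\emptyset$ — combining exactly as in the order-polynomial case.

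The main obstacle, and the one point genuinely different from the poset setting, is verifying that acyclicity of $\tilE$ is precisely what is needed to make the fibers behave correctly: in the poset case the relation $\L(P)$ is automatically the covering relation of a partial order, so transitivity and antisymmetry are free, whereas here $\tilE$ is just an orientation of the edges of $G$, and the constraints $\{t_v \le t_{v'} : (v,v')\in\tilE\}$ or $\{t_v < t_{v'}\}$ define a nonempty (resp.\ full-dimensional open) polytope in $(0,1)^V$ if and only if $\tilE$ contains no oriented cycle. I would isolate this as a small lemma: for an acyclic orientation $\tilE$ and any subset $Y\subseteq\tilE$, the region $\overset{\circ}{F_Y}$ is a nonempty open polytope of dimension $\#V$, and $F_Y = (\overline{F_Y})_{x_0}$ for $x_0$ the barycenter, so that Lemma~\ref{lem:semiopen} is applicable. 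Granting this lemma, everything else is a routine transcription of \S\ref{subsec:proof}, and I would also remark (as the paper does after Theorem~\ref{thm:erecip}) that since $-(-T)\ne T$ the two identities \eqref{eq:aocrecip1} and \eqref{eq:aocrecip2} are not equivalent and must be proved separately, though by the symmetric roles of $\le$ and $<$ above they follow from the same two fiber computations.
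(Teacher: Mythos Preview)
Your overall strategy is correct, but you are working much harder than necessary. The paper's proof is almost a one-liner once Theorem~\ref{thm:erecip} is in hand: after decomposing $\AOC^{\ep}(G,T)$ over acyclic orientations $\tilE$ (exactly as you do), the paper observes that for each fixed $\tilE$ the set of $\tilE$-compatible maps $V\to T$ is \emph{literally} $\Hom^{\leq}(P(V,\tilE),T)$, and the set of strictly compatible maps is $\Hom^{<}(P(V,\tilE),T)$, where $P(V,\tilE)$ is the finite poset on $V$ given by the transitive closure of $\tilE$. Acyclicity of $\tilE$ is exactly what makes this transitive closure antisymmetric, hence a genuine partial order. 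Then Theorem~\ref{thm:erecip} applies verbatim to each summand and the sum over $\tilE$ gives the result. No new fibration analysis, no new appeal to Lemma~\ref{lem:semiopen}, and no separate ``small lemma'' about nonemptiness of $\overset{\circ}{F_Y}$ is needed --- all of that is already packaged inside Theorem~\ref{thm:erecip}.

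What you do instead is re-run the entire proof of Theorem~\ref{thm:erecip} with $\tilE$ in place of $\L(P)$. This is valid (your ``main obstacle'' is correctly identified and correctly resolved by acyclicity), but it duplicates work and obscures the point that Theorem~\ref{thm:aocrecip} is a formal corollary. There is also a slip in your sketch of \eqref{eq:aocrecip2}: the projection $\pi_{1*}$ does not land in the \emph{strictly} compatible maps --- it lands in the compatible maps $\{c:V\to T \text{ compatible with }\tilE\}$, which you then stratify by $K(c)$; the stratum $K(c)=\emptyset$ is the strictly compatible locus and carries fiber $(0,1)^{\#V}$, while the strata with $K(c)=Y\neq\emptyset$ carry fibers $F_Y$ with $e(F_Y)=0$. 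Your computation of the fibers is right, only the description of the target of $\pi_{1*}$ is misstated. The cleanest fix is simply to invoke the poset $P(V,\tilE)$ and cite Theorem~\ref{thm:erecip}.
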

To prove Theorem \ref{thm:aocrecip}, we give alternative descriptions of 
$\AOC^{\leq(<)}(G, T)$ in terms of poset homomorphisms and 
graph configuration spaces. Let $\tilE$ be an acyclic orientation 
of $G=(V, E)$. Then $\tilE$ determines an ordering on $V$, called the transitive closure of $\tilE$, defined by 
\[
v<v'\Longleftrightarrow
\exists v_0, \dots, v_n\in V \text{ s.t. }
\left\{
\begin{array}{l}
v=v_0, v'=v_n, \text{ and}\\
(v_{i-1}, v_i)\in \tilE\text{ for }1\leq i\leq n. 
\end{array}
\right.
\]
This ordering defines a poset which we denote by $P(V, \tilE)$. 

A map $c\colon V\longrightarrow T$ is compatible with $\tilE$ if and only if 
$c$ is an increasing map from $P(V, \tilE)$ to $T$. Hence the set 
of maps compatible with $\tilE$ is identified with 
$\Hom^{\leq}(P(V, \tilE), T)$. We have the following decomposition. 
\begin{equation}
\label{eq:decaoc1}
\AOC^{\leq}(G, T)\simeq
\bigsqcup_{\tilE: \text{ acyclic ori.}}
\Hom^{\leq}(P(V, \tilE), T). 
\end{equation}
Similarly, $\AOC^<(G, T)$ is decomposed as follows. 
\begin{equation}
\label{eq:decaoc2}
\AOC^{<}(G, T)\simeq
\bigsqcup_{\tilE: \text{ acyclic ori.}}
\Hom^{<}(P(V, \tilE), T). 
\end{equation}

\begin{proof}[Proof of Theorem \ref{thm:aocrecip}]
We prove \eqref{eq:aocrecip1}. Using the above decompositions \eqref{eq:decaoc1} and \eqref{eq:decaoc2} together with Theorem \ref{thm:erecip}, 
we obtain 
\[
\begin{split}
e(\AOC^{\leq}(G, T))
&=
e\left(
\bigsqcup_{\tilE: \text{ acyclic ori.}}
\Hom^{\leq}(P(V, \tilE), T)\right)\\
&=
\sum_{\tilE: \text{ acyclic ori.}}
e\left(
\Hom^{\leq}(P(V, \tilE), T)\right)
\\
&=
(-1)^{\#V}\cdot
\sum_{\tilE: \text{ acyclic ori.}}
e\left(
\Hom^{<}(P(V, \tilE), T\times(0,1))\right)
\\
&=
(-1)^{\#V}\cdot
e\left(
\bigsqcup_{\tilE: \text{ acyclic ori.}}
\Hom^{<}(P(V, \tilE), T\times(0,1))\right)
\\
&=
(-1)^{\#V}\cdot
e(\AOC^{<}(G, T\times(0,1))). 
\end{split}
\]
This completes the proof. The second formula \eqref{eq:aocrecip2} is proved similarly. 
\end{proof} 

We deduce Stanley's reciprocity on chromatic polynomials (\cite{sta-acy}). 
Applying Theorem \ref{thm:aocrecip} and (\ref{eq:aocchi}) shows that (note that 
$T\times (0,1)$ is also a semialgebraic totally ordered set)
\[
\begin{split}
e(\AOC^{\leq}(G, T))
&=
(-1)^{\#V}\cdot
e(\AOC^{<}(G, T\times(0,1)))\\
&=
(-1)^{\#V}\cdot
\chi(G, e(T\times(0,1)
\\
&=
(-1)^{\#V}\cdot
\chi(G, -e(T)). 
\end{split}
\]
Putting $T=[n]$, we have the following Stanley's reciprocity. 
\begin{corollary}
\label{cor:chrec}
Let $G=(V, E)$ be a finite simple graph and $n\in\N$. Then 
\[
\#\AOC^{\leq}(G, [n])=(-1)^{\#V}\cdot
\chi(G, -n). 
\]
\end{corollary}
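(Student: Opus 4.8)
The plan is to obtain Corollary \ref{cor:chrec} as a direct specialization of the chain of equalities already assembled just before its statement, so the only real work is to check that the left-hand side collapses to a cardinality and that $\chi(G,-n)$ is the correct value of the chromatic polynomial at a negative integer. First I would observe that for $T=[n]$ the set $\AOC^{\leq}(G,[n])$ is a finite set: it is a subset of the finite set of pairs $(\tilE,c)$ with $\tilE$ an edge orientation of $G$ and $c\colon V\to[n]$ a map. Since any finite semialgebraic set has Euler characteristic equal to its cardinality (Example \ref{ex:saposet}(a)), we get $e(\AOC^{\leq}(G,[n]))=\#\AOC^{\leq}(G,[n])$. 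This is the step that lets us pass from the Euler-characteristic identity to a counting identity.

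Next I would invoke the displayed computation preceding the corollary with the specialization $T=[n]$. That computation, which combines Theorem \ref{thm:aocrecip} (the reciprocity \eqref{eq:aocrecip1}) with the chromatic identity \eqref{eq:aocchi}, yields
\[
e(\AOC^{\leq}(G,[n]))=(-1)^{\#V}\cdot\chi\!\left(G,\,-e([n])\right).
\]
Here one uses that $[n]\times(0,1)$ is again a semialgebraic totally ordered set (Example \ref{ex:satoset}) so that \eqref{eq:aocchi} applies to it, and that $e([n]\times(0,1))=e([n])\cdot e((0,1))=n\cdot(-1)=-n$ by multiplicativity of the Euler characteristic together with $e([n])=n$ and $e((0,1))=-1$ (Example \ref{ex:saposet}(b)). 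Substituting $e([n])=n$ gives the right-hand side $(-1)^{\#V}\cdot\chi(G,-n)$.

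Combining the two displays finishes the proof: $\#\AOC^{\leq}(G,[n])=e(\AOC^{\leq}(G,[n]))=(-1)^{\#V}\cdot\chi(G,-n)$. I do not anticipate any genuine obstacle here, since all the substantive content has been proved earlier; the one point requiring a moment's care is the bookkeeping of Euler characteristics in $e([n]\times(0,1))=-n$, which I would spell out explicitly as above rather than leave implicit. If one wished to be fully self-contained one could alternatively recall that $\AOC^{\leq}(G,[n])$ has cardinality equal to the number of compatible pairs and compare with Stanley's classical formula $\chi(G,-n)=(-1)^{\#V}\cdot\#\{(\tilE,c)\}$, but since Theorem \ref{thm:aocrecip} is already available the short route through the preceding display is cleanest.
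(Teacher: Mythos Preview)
Your proposal is correct and follows essentially the same route as the paper: the corollary is obtained by specializing the displayed chain $e(\AOC^{\leq}(G,T))=(-1)^{\#V}\chi(G,-e(T))$ to $T=[n]$ and using that a finite semialgebraic set has Euler characteristic equal to its cardinality. If anything, you are slightly more explicit than the paper in justifying $e([n]\times(0,1))=-n$ and the passage from $e$ to $\#$.
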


\subsection{Flow polynomials}

\label{subsec:fl}
This section treats finite oriented graphs that are allowed to have distinguished multiple edges and loops. Our object is a tuple $G=(V, \cE, h, t)$ where  
$V$ and $\cE$ are finite sets and $h\colon \cE\longrightarrow V$ and 
$t\colon\cE\longrightarrow V$ are maps. An element of $V$ is called a vertex and an element of $\cE$ is called an edge. For an edge $e\in \cE$, $h(e)\in V$ is called the head and $t(e)\in V$ is called the tail. 
An edge $e\in \cE$ is a loop if $h(e)=t(e)$.  In Figure \ref{fig:OG}, the oriented graph $G$ has five edges $e_1,\dots,e_5$ and their orientations are described by $h(e_1)=h(e_2)=t(e_3)=x$, $t(e_1)=t(e_2)=h(e_3)=h(e_4)=y$ and $t(e_4)=h(e_5)=t(e_5)=z$. 

\begin{figure}[b]
\centering
\begin{tikzpicture}[scale=2.5]
\draw (0,0) node[v](1){$x$}; 
\draw (1,0) node[v](2){$y$}; 
\draw (2,0) node[v](3){$z$};
\draw[thick,->] (1)--(2) node at (0.5,0.08) {$e_2$}; 
\draw[thick,->] (1) to [out=45, in=135] (2) node at (0.5,0.35) {$e_1$};
\draw[thick,->] (2) to [out=225, in=-45] (1) node at (0.5,-0.35) {$e_3$};
\draw[thick,->] (2)--(3) node at (1.5,0.08) {$e_4$};
\draw[thick,->] (2.05,-0.1) arc [start angle = -145, end angle = 145, radius = 0.19] node at (2.52,0) {$e_5$};
\end{tikzpicture}
\caption{An oriented graph.}
\label{fig:OG}
\end{figure}
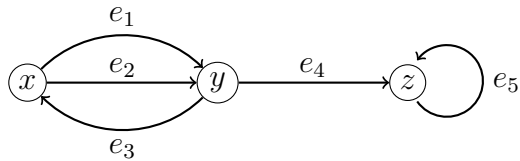

An oriented graph $G$ can also be seen as a $1$-dimensional 
CW-complex. The number of connected components and the $1$-st Betti numbers 
are denoted by $b_0(G)$ and $b_1(G)$, respectively. Note that 
$b_0(G)-b_1(G)=\#V-\#\cE$. An edge $e\in \cE$ is called a coloop if 
$b_0(G\smallsetminus e)=b_0(G)+1$. The graph in Figure \ref{fig:OG} has the unique coloop $e_4$.

Let $\mathcal{A}$ be an abelian group. The map $f\colon \cE\longrightarrow\mathcal{A}$ 
is called an $\mathcal{A}$-flow if $f$ satisfies 
\begin{equation}
\sum_{e: h(e)=v}f(e)
=
\sum_{e: t(e)=v}f(e)
\end{equation}
for all $v\in V$. Let $f$ be an $\mathcal{A}$-flow. Denote 
$\Supp(f)=\{e\in \cE\mid f(e)\neq 0\}$. An $\mathcal{A}$-flow 
is called nowhere zero if $\Supp(f)=\cE$. 
The set of all 
$\mathcal{A}$-flows 
and nowhere zero $\mathcal{A}$-flows are denoted by 
$\mathcal{F}(G, \mathcal{A})$ 
and 
$\mathcal{F}^0(G, \mathcal{A})$, respectively. 
%is denoted by 
%\[
%\mathcal{F}(G, \mathcal{A}):=\{f:\cE\longrightarrow\mathcal{A}\mid f\mbox{ is 
%an $\mathcal{A}$-flow}\}. 
%\]

Let $\mathcal{A}$ be a semialgebraic abelian group. Then clearly 
$\mathcal{F}^0(G, \mathcal{A})$ possesses a structure of a semialgebraic 
set. 

The flow polynomial is a polynomial $\phi_G(t)\in\Z[t]$ which satisfies 
\[
\phi_G(k)=\#\mathcal{F}^0(G, \Z/k\Z), 
\]
for all $k>0$. The flow polynomial is also characterized by the 
following properties: 
\begin{itemize}
\item 
if $\cE=\emptyset$, then $\phi_G(t)=1$; 
\item 
if $e\in \cE$ is a loop, then $\phi_G(t)=(t-1)\phi_{G\smallsetminus e}(t)$; 
\item 
if $e\in \cE$ is a coloop, then $\phi_G(t)=0$; 
\item 
if $e\in \cE$ is neither a loop nor a coloop, then 
$\phi_G(t)=\phi_{G/e}(t)-\phi_{G\smallsetminus e}(t)$. 
\end{itemize}

\begin{proposition}
\label{prop:additivity}
Let $G$ be a finite oriented graph, and $\mathcal{A}$ be a 
semialgebraic abelian group. 
\begin{itemize}
\item[(a)] 
If $e\in \cE$ is a loop, 
then $\mathcal{F}^0(G, \mathcal{A})\simeq 
(\mathcal{A}\smallsetminus\{0\})\times
\mathcal{F}^0(G\smallsetminus e, \mathcal{A})$. 
\item[(b)] 
If $e\in \cE$ is a coloop, then $\mathcal{F}^0(G, \mathcal{A})=\emptyset$.  
\item[({c})]   
If $e\in \cE$ is neither a loop nor a coloop, then 
$\mathcal{F}^0(G/e, \mathcal{A})\simeq 
\mathcal{F}^0(G, \mathcal{A})\sqcup
\mathcal{F}^0(G\smallsetminus e, \mathcal{A})$. 
\end{itemize}
\end{proposition}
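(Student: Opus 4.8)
The plan is to prove each of the three isomorphisms (a), (b), (c) directly at the level of sets by exhibiting explicit bijections, since each statement is really a combinatorial identity about $\mathcal{A}$-flows; the semialgebraic structure is then automatically respected because all the maps involved are coordinate projections, inclusions, or rearrangements of coordinates in $\mathcal{A}^{\cE}$, all of which are semialgebraic. Concretely, I view $\mathcal{F}^0(G,\mathcal{A})\subset\mathcal{A}^{\cE}$ as the locally closed subset cut out by the $\#V$ linear flow-conservation equations together with the open conditions $f(e)\neq 0$ for each $e\in\cE$.

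For part (a), suppose $e\in\cE$ is a loop, so $h(e)=t(e)=v_0$. Then in the conservation equation at $v_0$ the contribution of $e$ appears on both sides and cancels, and $e$ does not appear in any other vertex equation; hence the flow condition for $G$ is equivalent to the flow condition for $G\smallsetminus e$ imposed on $f|_{\cE\smallsetminus\{e\}}$, with $f(e)$ completely unconstrained except for the nowhere-zero requirement $f(e)\neq 0$. The map $f\mapsto (f(e), f|_{\cE\smallsetminus\{e\}})$ is therefore a semialgebraic bijection onto $(\mathcal{A}\smallsetminus\{0\})\times\mathcal{F}^0(G\smallsetminus e,\mathcal{A})$. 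Part (b) is immediate: if $e$ is a coloop, then $G\smallsetminus e$ has one more connected component, so $e$ forms a cut separating its two endpoints (in their shared component) into two pieces; summing the conservation equations over all vertices on one side of the cut forces $f(e)=0$, contradicting the nowhere-zero condition, so $\mathcal{F}^0(G,\mathcal{A})=\emptyset$.

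For part (c), the only genuinely nontrivial step: $e$ is neither a loop nor a coloop, with tail $t(e)=u$ and head $h(e)=w$, $u\neq w$. I start from $\mathcal{F}^0(G/e,\mathcal{A})$, where $u$ and $w$ have been identified to a single vertex $\bar v$; its conservation equation at $\bar v$ is the \emph{sum} of the equations for $G$ at $u$ and at $w$. Given $g\in\mathcal{F}^0(G/e,\mathcal{A})$ (a nowhere-zero assignment on $\cE\smallsetminus\{e\}$ satisfying the $G/e$ equations), I extend it to an assignment $\tilde g$ on all of $\cE$ by defining $\tilde g(e)$ to be the unique value making the conservation equation at $u$ hold — namely $\tilde g(e)=\sum_{e': h(e')=u} g(e')-\sum_{e'\neq e:\, t(e')=u} g(e')$; one checks using the $G/e$-equation at $\bar v$ that the equation at $w$ is then automatically satisfied, and the equations at all other vertices are unchanged, so $\tilde g\in\mathcal{F}(G,\mathcal{A})$ and moreover $\tilde g(e')\neq 0$ for all $e'\neq e$. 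Thus $\tilde g$ lies in $\mathcal{F}^0(G,\mathcal{A})$ exactly when $\tilde g(e)\neq 0$, and in $\mathcal{F}^0(G\smallsetminus e,\mathcal{A})$ (after discarding the $e$-coordinate) exactly when $\tilde g(e)=0$. This gives the disjoint decomposition $\mathcal{F}^0(G/e,\mathcal{A})\simeq\mathcal{F}^0(G,\mathcal{A})\sqcup\mathcal{F}^0(G\smallsetminus e,\mathcal{A})$, all identifications being semialgebraic since $\tilde g(e)$ depends linearly on the coordinates of $g$. The main obstacle is the bookkeeping in (c): one must be careful that the ``extend by the value forced at $u$'' recipe is well defined (it is, since $e$ is not a loop so $u\neq w$ and $e$ contributes to the $u$-equation but, being the only edge we are adding back, the recipe is unambiguous) and that the assumption ``$e$ is not a coloop'' is not actually needed for the set-theoretic bijection in (c) — it is needed only to match the contraction/deletion recursion for $\phi_G$, so I would remark that (c) holds whenever $e$ is not a loop, the coloop hypothesis being harmless.
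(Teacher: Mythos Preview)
Your proof is correct and fills in exactly the details the paper omits: the paper's own proof consists of the single word ``Straightforward.'' Your explicit bijections in (a), (b), (c) are the standard ones, and your remark that the coloop hypothesis in (c) is not needed for the set-theoretic bijection (only for aligning with the recursion for $\phi_G$) is accurate.
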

\begin{proof}
Straightforward. 
\end{proof}

\begin{theorem}
Let $G$ be a finite oriented graph and 
$\mathcal{A}$ be a semialgebraic abelian group. 
Then $e(\mathcal{F}^0(G, \mathcal{A}))=\phi_G(e(\mathcal{A}))$. 
\end{theorem}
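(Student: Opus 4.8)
The plan is to run an induction on $\#\cE$ that mirrors, term by term, the deletion--contraction recursion characterizing $\phi_G(t)$, using Proposition \ref{prop:additivity} to supply the matching identities on the Euler-characteristic side. This is the same strategy as the proof of Theorem \ref{thm:echr}, except that now there are four base/recursive cases (empty edge set, loop, coloop, ordinary edge) instead of two, and we must track the semialgebraic structure carefully because $\mathcal{A}$ is no longer a finite set.

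First I would dispose of the base case $\cE=\emptyset$: here $\mathcal{F}^0(G,\mathcal{A})$ is a single point (the empty map), so its Euler characteristic is $1$, matching $\phi_G(t)=1$ evaluated anywhere. Next, assuming $\#\cE\geq 1$, pick any edge $e\in\cE$ and split into the three cases of Proposition \ref{prop:additivity}. If $e$ is a loop, part (a) gives $\mathcal{F}^0(G,\mathcal{A})\simeq(\mathcal{A}\smallsetminus\{0\})\times\mathcal{F}^0(G\smallsetminus e,\mathcal{A})$; since $e(\mathcal{A}\smallsetminus\{0\})=e(\mathcal{A})-1$ by additivity of the Euler characteristic (removing a point from a semialgebraic set subtracts $1$, as $\{0\}$ is a $0$-cell), multiplicativity of $e$ under products and the inductive hypothesis for $G\smallsetminus e$ yield
\[
e(\mathcal{F}^0(G,\mathcal{A}))=(e(\mathcal{A})-1)\cdot\phi_{G\smallsetminus e}(e(\mathcal{A}))=\phi_G(e(\mathcal{A})),
\]
using the loop recursion $\phi_G(t)=(t-1)\phi_{G\smallsetminus e}(t)$. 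If $e$ is a coloop, part (b) says $\mathcal{F}^0(G,\mathcal{A})=\emptyset$, so $e(\mathcal{F}^0(G,\mathcal{A}))=0=\phi_G(e(\mathcal{A}))$. Finally, if $e$ is neither a loop nor a coloop, part (c) gives the disjoint decomposition $\mathcal{F}^0(G/e,\mathcal{A})\simeq\mathcal{F}^0(G,\mathcal{A})\sqcup\mathcal{F}^0(G\smallsetminus e,\mathcal{A})$; additivity of $e$ rearranges this to $e(\mathcal{F}^0(G,\mathcal{A}))=e(\mathcal{F}^0(G/e,\mathcal{A}))-e(\mathcal{F}^0(G\smallsetminus e,\mathcal{A}))$, and the inductive hypothesis applied to $G/e$ and $G\smallsetminus e$ (both of which have one fewer edge) together with the recursion $\phi_G(t)=\phi_{G/e}(t)-\phi_{G\smallsetminus e}(t)$ closes the induction.

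I expect the only genuinely delicate point to be bookkeeping rather than mathematics: one must check that the semialgebraic isomorphisms of Proposition \ref{prop:additivity} are honest semialgebraic maps (so that $e$ is defined and invariant), and that in the contraction case $G/e$ is again a legitimate oriented graph in the sense of this section (identifying $h(e)$ with $t(e)$, keeping all other edges with their head/tail maps composed with the identification) so that the inductive hypothesis applies. Since those isomorphisms are given by linear relations among the flow values --- solving the conservation law at a vertex for one coordinate in terms of the others --- they are manifestly semialgebraic, so no real obstacle arises. The proof is therefore a routine, if slightly longer, replay of the argument for Theorem \ref{thm:echr}.
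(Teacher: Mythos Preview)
Your proposal is correct and follows exactly the approach the paper takes: induction on the number of edges using Proposition~\ref{prop:additivity} to mirror the deletion--contraction recursion for $\phi_G$, in direct analogy with the proof of Theorem~\ref{thm:echr}. The paper's own proof is in fact just a one-line pointer to this argument, so your write-up is a faithful (and more detailed) expansion of it.
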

\begin{proof}
Using Proposition \ref{prop:additivity}, it is proved by 
induction on the number of edges. 
(See Theorem \ref{thm:echr}.) 
\end{proof}

An oriented graph $G$ is called totally cyclic if every edge is 
contained in an oriented cycle. Let $\sigma\subset \cE$ be a subset of 
edges and denote by ${}_\sigma G$ the reorientation of $G$ along $\sigma$. 
A subset $\sigma\subset \cE$ is a totally cyclic reorientation if 
${}_\sigma G$ is totally cyclic. 

Let us denote by 
$\FTC(G, \mathcal{A})$ 
the set of all pairs $(f, \sigma)$ 
of the flow $f$ and totally cyclic reorientation 
$\sigma\subset \cE\smallsetminus\Supp(f)$. Namely, 
\[
\FTC(G, \mathcal{A})=
\left\{
(f, \sigma)
\left|
\begin{array}{l}
f\in\mathcal{F}(G, \mathcal{A}), \mbox{ and 
$\sigma\subset \cE\smallsetminus\Supp(f)$ is a}\\
\mbox{totally cyclic reorientation for }G_{/\Supp(f)} 
\end{array}
\right.
\right\}. 
\]
For each subset $\sigma \subset \cE$, the set of all $f$ with 
$(f, \sigma)\in\FTC(G, \mathcal{A})$ forms a semialgebraic 
subset of $\mathcal{F}(G, \mathcal{A})$. Therefore 
$\FTC(G, \mathcal{A})$ possesses a structure of semialgebraic 
set. Let us define $-\mathcal{A}$ by 
\[
-\mathcal{A}:=\mathcal{A}\times\R. 
\]
The following is proved along the same lines of the proof presented in 
\cite[Appendix A]{bre-san}, which can be considered as a 
Breuer-Sanyal's reciprocity at the level of Euler characteristic. 

\begin{theorem}
\label{thm:flowrecip}
Let $G$ be a finite oriented graph and $\mathcal{A}$ be a semialgebraic 
abelian group. Then 
\[
e(\FTC(G, \pm\mathcal{A}))=
(-1)^{b_1(G)}e(\mathcal{F}^0(G, \mp\mathcal{A})). 
\]
\end{theorem}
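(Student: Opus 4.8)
The plan is to mimic the structure of the proof of Theorem~\ref{thm:erecip}, replacing poset homomorphisms by flows and the first projection $\pi_1$ by the projection $-\mathcal{A}=\mathcal{A}\times\R\to\mathcal{A}$. Concretely, for the formula $e(\FTC(G,-\mathcal{A}))=(-1)^{b_1(G)}e(\mathcal{F}^0(G,\mathcal{A}))$ (the other sign being handled in parallel, as in \S\ref{subsec:proof}), I would first record that a $(\mathcal{A}\times\R)$-flow is a pair $(f,h)$ with $f\in\mathcal{F}(G,\mathcal{A})$ and $h\in\mathcal{F}(G,\R)$, since the flow condition is linear and hence decouples over the two factors. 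This gives a projection $p\colon \FTC(G,\mathcal{A}\times\R)\to \{\,(f',\sigma)\mid f'\in\mathcal{F}(G,\mathcal{A}),\ \sigma\subset\cE\smallsetminus\Supp(f')\ \text{totally cyclic reorientation for }G_{/\Supp(f')}\,\}$, obtained by forgetting the $\R$-component of the flow. The key point is that the combinatorial datum $\sigma$ and the support of the $\mathcal{A}$-part are governed entirely by $f'$; only the continuous $\R$-flow is free to vary in the fiber, subject to sign/support constraints imposed by $\sigma$.

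Next I would stratify the base $\mathcal{F}(G,\mathcal{A})$ (or rather its image under $p$, paralleling the set $M$ and its pieces $M_X$ in \S\ref{subsec:proof}) according to the support $S=\Supp(f')\subset\cE$ together with $\sigma$: on each stratum the fiber of $p$ is constant. This is where I would invoke the analysis of \cite[Appendix~A]{bre-san}: the fiber over a point with $\mathcal{A}$-support $S$ and totally cyclic reorientation $\sigma$ is, after passing to the contracted graph $G_{/S}$, the set of $\R$-flows $h$ on $G_{/S}$ whose sign pattern is compatible with $\sigma$ on $\sigma$ (strict inequalities) and which vanish nowhere on $\sigma$ while being unconstrained elsewhere --- in other words a relatively open polyhedral cone inside the flow space $\mathcal{F}(G_{/S},\R)\cong\R^{b_1(G_{/S})}$, cut out by the chambers of the graphical (cographical) hyperplane arrangement. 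The Euler characteristic of such a chamber is $(-1)^{b_1(G_{/S})}$ when it is full-dimensional and $0$ when the constraints force it onto a proper face; and the Breuer--Sanyal reciprocity is precisely the statement that, summed over all $\sigma$, the contributions of the strata with nonempty $\mathcal{A}$-support cancel (giving $0$) while the stratum $S=\cE$, i.e.\ $f'$ nowhere zero, contributes the fiber $\R^{b_1(G)}$ with $\sigma=\emptyset$, hence Euler characteristic $(-1)^{b_1(G)}$.

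Assembling this with additivity and multiplicativity of $e$ yields
\[
e(\FTC(G,\mathcal{A}\times\R))
=\sum_{S,\sigma} e(\text{stratum}_{S,\sigma})\cdot e(\text{fiber}_{S,\sigma})
=(-1)^{b_1(G)}\,e(\mathcal{F}^0(G,\mathcal{A})),
\]
where the outer sum is over pairs of a support $S$ and an admissible reorientation $\sigma$, and all terms with $S\neq\cE$ cancel after summing over $\sigma$. For the reverse identity $e(\FTC(G,\mathcal{A}))=(-1)^{b_1(G)}e(\mathcal{F}^0(G,\mathcal{A}\times\R))$ I would run the same projection in the other direction (forgetting the $\R$-part of a nowhere-zero flow), where the fibers are genuinely open cones $\cong(0,1)^{\cdots}$ up to the flow-space dimension, so Lemma~\ref{lem:semiopen} is not needed --- exactly as the proof of \eqref{eq:main2} is simpler than that of \eqref{eq:main1}.

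The main obstacle is the fiber analysis: I need the precise bookkeeping from \cite[Appendix~A]{bre-san} translating ``totally cyclic reorientation $\sigma$ of $G_{/\Supp(f)}$'' into a sign condition on an $\R$-flow, and then the cancellation lemma that the alternating sum over reorientations of a non-full-support stratum vanishes. This is where I would need to transcribe (rather than re-derive) the inside-out / Breuer--Sanyal deletion-contraction bijection, checking that every bijection they use is semialgebraic and hence preserves $e$. Everything else --- linearity of the flow condition giving the product decomposition $\mathcal{F}(G,\mathcal{A}\times\R)\cong\mathcal{F}(G,\mathcal{A})\times\mathcal{F}(G,\R)$, semialgebraicity of all the strata (already noted in the paragraph preceding the statement), and the Euler-characteristic computation of a polyhedral chamber (essentially Lemma~\ref{lem:semiopen} applied to the flow polytope) --- is routine given the tools already in the paper.
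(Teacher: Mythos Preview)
The paper does not actually supply a proof of this theorem: it only asserts that the argument ``is proved along the same lines of the proof presented in \cite[Appendix~A]{bre-san}.'' So your outline cannot be compared to a written proof in the paper; it can only be judged on whether it is a viable transcription of the Breuer--Sanyal argument into the semialgebraic setting, which is exactly what the paper is asking for.

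Your overall strategy---decompose the $\mathcal{A}\times\R$ side by the $\mathcal{A}$-component of the flow, identify fibers with pieces of the real flow space cut by the cographic arrangement, and use that open cones have Euler characteristic $(-1)^{b_1}$---is the right one and is indeed the analogue of \S\ref{subsec:proof}. However, the projection $p$ you write down is not well-defined. If $((f,h),\sigma)\in\FTC(G,\mathcal{A}\times\R)$ then $\sigma$ is a totally cyclic reorientation of $G_{/\Supp(f,h)}$, where $\Supp(f,h)=\Supp(f)\cup\Supp(h)$; simply forgetting $h$ does \emph{not} give a pair $(f,\sigma)\in\FTC(G,\mathcal{A})$, because $\sigma$ says nothing about the edges in $\Supp(h)\smallsetminus\Supp(f)$ and so is not a reorientation of $G_{/\Supp(f)}$ at all. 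The correct map sends $((f,h),\sigma)$ to $(f,\tau)$ where $\tau$ extends $\sigma$ by the \emph{signs of $h$} on $\Supp(h)\smallsetminus\Supp(f)$; one then has to check (this is the substance of the Breuer--Sanyal bijection) that $\tau$ is totally cyclic for $G_{/\Supp(f)}$ and that the fiber over $(f,\tau)$ is the open cone of $\R$-flows on $G$ restricting to positive flows (in the $\tau$-orientation) on $G_{/\Supp(f)}$. Equivalently, and perhaps more cleanly, one may project only to $f$, observe that the fiber over a stratum $\{f:\Supp(f)=S\}$ is $K_S\times\FTC(G_{/S},\R)$ with $K_S=\ker\big(\mathcal{F}(G,\R)\to\mathcal{F}(G_{/S},\R)\big)\cong\R^{b_1(G)-b_1(G_{/S})}$, and then reduce to the purely real statement $e(\FTC(H,\R))=0$ for any $H$ with at least one edge. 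Your phrase ``the contributions of the strata with nonempty $\mathcal{A}$-support cancel'' is slightly misleading: nothing cancels across different $\sigma$'s; rather, each non-full stratum contributes a factor with Euler characteristic zero.

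Once the projection is fixed in this way, your sketch is exactly the semialgebraic transcription the paper has in mind, and the remaining checks (semialgebraicity of each stratum, trivialization of the fibers, Euler characteristic of open polyhedral cones) are routine as you say.
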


\medskip

\noindent
{\bf Acknowledgements.} 
T. H. was supported by Grant-in-Aid for Young Scientists (B) 15K17549, JSPS and M. Y. was partially supported by Grant-in-Aid for Scientific Research (C) 25400060, JSPS. The authors thank Matthias Beck for informing them of the flow polynomial reciprocity.

\end{document}